\newtheorem{Theorem}{Theorem}[section]
\newtheorem{Proposition}[Theorem]{Proposition}
\newtheorem{Lemma}[Theorem]{Lemma}
\newtheorem{Corollary}[Theorem]{Corollary}
\newtheorem{Remark}[Theorem]{Remark}
\newtheorem{Hypothesis}[Theorem]{Hypothesis}
\numberwithin{equation}{section}
\def\R{\mathbb R}
\def\E{\mathbb E}
\def\P{\mathbb P}
\def\ds{\displaystyle}
\title{\bf Gradient estimates for SDEs  without monotonicity type conditions 
}
\author{Giuseppe Da Prato \thanks{Piazza dei Cavalieri 7, 56126 Pisa, Italy; g.daprato@sns.it}
\\\normalsize Scuola Normale Superiore \\\\
Enrico Priola \thanks{Via Carlo Alberto 10, 10123 Torino, Italy; enrico.priola@unito.it}
\\\normalsize Universit\`a di Torino \\ \normalsize Dipartimento di Matematica ``G. Peano''}
\date{ }
\begin{document}
\maketitle

\begin{abstract}
We prove gradient estimates for  transition Markov  semigroups $(P_t)$ associated to   
SDEs driven by  multiplicative Brownian noise  having possibly unbounded $C^1$-coefficients, without  requiring any monotonicity type condition.
 In particular,   first  derivatives of coefficients can grow polynomially and even exponentially.
We establish  pointwise  estimates  with  weights for  $D_x P_t\varphi$ 
 of the form
$$
{\sqrt{t}}  \, |D_x P_t \varphi (x) | \le 
c \, (1+ |x|^k) \, \| \varphi\|_{\infty}, \;\;\;  \text{$t \in (0,1]$, $\varphi \in C_b (\R^d)$, $x \in \R^d.$}
$$
To prove the result we use two main tools.
 First, we consider  a  Feynman--Kac semigroup 
with  potential $V$ related to the growth of the
coefficients   and of their derivatives    
for which we can use a Bismut-Elworthy-Li type formula.  Second,  we introduce  a new regular approximation for
the coefficients of the SDE.
At the end of the paper we  provide 
an   example of SDE with additive noise and  drift $b$ 
 having sublinear growth together with its  derivative 
 such  that   
 uniform estimates 
 for $D_x P_t \varphi$
 without weights 
 do not hold.
 \end{abstract}

\bigskip

\noindent {\bf 2010 Mathematics Subject Classification AMS}:  60H10,  60H30, 47D07.  \medskip

\noindent {\bf Key words}: 
stochastic differential equations, Markov semigroups, gradient estimates,  Feynman--Kac  formula,  Bismut-Elworthy-Li formula. 
\medskip

\section{ Introduction and main
result}

We consider   the transition Markov semigroup $(P_t) $ associated to the SDE
 \begin{equation}
\label{e1.1}
\left\{\begin{array}{l}
dX(t)= b(X(t))dt+ \sigma (X(t))dW(t),\\
X(0)=x\in \R^d,
\end{array}\right. 
\end{equation}
where  $W= (W(t))$ is an $\R^d$--valued standard Brownian motion  defined   on a fixed stochastic basis $(\Omega, {\cal F}, ({\cal F}_t), \P)$ (see, for instance, \cite{Kr95} and \cite{Ma08})  and $b : \R^d \to \R^d$ and $\sigma: \R^d \to L(\R^d) = \R^d \otimes \R^d$ are $C^1-$functions.
  We write
$$
\sigma (X(t))dW(t) = \sum_{i=1}^d \sigma_i (X(t)) dW^i(t),
$$
where $W^i= (W^i(t))$ are independent real Brownian motions, $i=1, \ldots, d$.
We have  \begin{equation}
\label{e2.3a}
P_t\varphi(x)=\E[\varphi(X(t,x))],\quad t\ge 0,\; x\in {\R^d},\;\varphi\in C_b({\R^d}).
\end{equation} 
We will prove pointwise gradient estimates such as
\begin{equation} \label{222}
{\sqrt{t}}  \, |D_x P_t \varphi (x) | \le 
c \, w(x) \, \| \varphi\|_{\infty}, \;\;\;  \text{$t \in (0,1]$, $\varphi \in C_b (\R^d)$, $x \in \R^d,$}
\end{equation}
where $w(x)$ is a suitable weight related to the growth of 
$b$ and $\sigma$  and of their derivatives. 

There are several recent papers on uniform gradient estimates without weights  for non-degenerate
 diffusion semigroups which   are of the form
\begin{equation}\label{gr2} 
{\sqrt{t}}  \, |D_x P_t \varphi (x) | \le 
c \, \, \| \varphi\|_{\infty}, \;\;\;  \text{$t \in (0,1]$, $\varphi \in C_b (\R^d)$, $x \in \R^d.$}
\end{equation}
Such papers deal with 
the case of unbounded coefficients and even with possibly irregular coefficients.
We refer to \cite{El92},  \cite{Ce}, \cite{BeFo04},  \cite{PrWa06} and  \cite{PrPo13} and the references therein. Uniform gradient estimates related  to some non-linear parabolic equations are also considered in \cite{PrPo13}.
 We also mention that $D_xP_T \varphi(x)$ is of interest in  financial mathematics where it is called    Greek: it represents the rate of change  of the price of the derivative at time $T$ with respect to the initial prices; see Chapter 6 in \cite{N}. 

 Assuming regular unbounded coefficients and the existence of a Lyapunov function, a typical  monotonicity condition one imposes to obtain \eqref{gr2} is 
\begin{equation}\label{de22} 
{2}\langle Db(x) h,h \rangle  + \sum_{i=1}^d | D\sigma_i(x) h |^2 \le C |h|^2, \quad\;   x,h\in \R^d
\end{equation}
(we denote by   $|\cdot|$ and  $\langle\cdot,\cdot \rangle$   the norm and the inner product in  ${\R^d}$, $d \ge 1$). 
Recall that for the class of non-degenerate diffusions  with   $b$ and $\sigma$   globally bounded,   estimates   \eqref{gr2}  holds without any control  on the derivatives, see e.g. Section 5 in  \cite{St}.

In this paper we consider cases in which $b$ and $\sigma $ are regular, possibly unbounded but  
condition \eqref{de22} does not hold in general.  In such situations we can  prove  gradient estimates \eqref{222} with weights. Moreover, we  provide
a one-dimensional   example of SDE with additive noise and  drift $b$ 
with  sublinear growth 
 together with its  derivative 
 such  that 
 uniform estimates \eqref{gr2}
 do not hold (see Section 6).

Using the notation $a(x)  = \sigma(x) \sigma^*(x),$  where $\sigma^*(x)$ is the transposed matrix of $\sigma(x)$, we make the following assumptions. 
\begin{Hypothesis} \label{h1}  Let  $\gamma \in [1/2,1]$,
   $M_0 >0$ and $c_0 \ge 1$ and consider 
\begin{equation} \label{w55}
f(t) = M_0  e^{c_0 \int_1^t { s^{-\gamma} } \, ds}, \;\;\; t \ge 1. 
\end{equation}
(H1) There exists  $L >0$ such that, for any $x \in \R^d$, 
\begin{gather*}
\nonumber
{ 2 \langle b(x),x   \rangle + 
Tr(a(x) ) \, + \, \frac{ 8 c_0 \, \langle a(x) x,x   \rangle }{ (1+ |x|^2)^{\gamma}}
\le  L \,  (1+ |x|^2)^{\gamma}.} 
\end{gather*}
(H2) We have,    for any $x  \in \R^d$,  for any $h \in \R^d,$ with $|h| =1$,
\begin{equation}
\label{e1.3} {
 {2}|Db(x) h|  + \sum_{i=1}^d | D\sigma_i(x) h |^2 \le   \, f (1+|x|^{2}).
}
\end{equation} 
(H3)  $\sigma(x)$ is invertible, $x \in \R^d$. Moreover   there exists $\nu >0$ such that
\begin{equation}
\label{inv}
\langle a(x) h,h \rangle \ge \nu |h|^2,\;\;\; x , \, h \in \R^d.
\end{equation} 
\end{Hypothesis}
 The above  function $f$  allows  for non-standard growth of 
the derivatives of $b$ and $\sigma$ (see also Remarks \ref{cases} and  \ref{serv1}). For instance, when $\gamma =1$ we have $f(t) = M_0 t^{c_0} $ and  when $\gamma =1/2$ we  have $f(t)= M e^{2c_0 \sqrt{t}}$, $t \ge 1$.  However   there is a  balance between the growth of the coefficients 
 and the growth of its first derivatives. 

Under the previous assumptions we  prove gradient estimates \eqref{222} with weight 
$$
 w(x) = [f(1+ |x|^2)]^{2},\;\;\; x \in \R^d.
$$
 Choosing  $\gamma =1$, we can allow
 $\sigma(x)$ to grow at most linearly and moreover
$
 \langle b(x),x   \rangle 
$ $ \le  C \,  (1+ |x|^2),$ $x\in \R^d. $ In this case,  we can allow
 the following polynomial growth of the first derivatives:
$$
{2}|Db(x) h|  + \sum_{i=1}^d | D\sigma_i(x) h |^2 \le M_0  \, (1+|x|^{2})^{c_0}, \;\;  x \in \R^d, \;\;
h\in \R^d,\; |h|=1,
$$
 and  obtain ${\sqrt{t }}  \, |D_x P_t \varphi (x) | \le C \,   (1+|x|^{2})^{2c_0}
  \, \| \varphi\|_{\infty}$, for any $\varphi \in C_b(\R^d)$. 
  A simple  one-dimensional example  is 
$$ 
dX(t) = X(t) \sin(X^2(t)) dt+ dW(t),\;\; X(0)=x;
$$  in this case $f(t) = 2 t$  ($c_0 =1
$ and $\gamma=1$). 
 Other cases which we can consider are collected in Remark \ref{cases}.

   Existence and uniqueness  of a strong solution $X(\cdot,x)$ to \eqref{e1.1}
 is standard since (H1) implies 
the well-known non-explosion  condition
\begin{equation} \label{111} 
2 \langle b(x),x   \rangle  +  Tr(a(x) )     \le  L (1+ |x|^2),\quad  x\in \R^d, 
\end{equation}
 see e.g. the monographs  \cite{Kr95} and  \cite{Ma08}.   We will also use hypothesis   (H1)  to get
 $L^p$-estimates for  $f( |X(\cdot,x)|^2 + 1)$ (see  Proposition \ref{l2.1} and Corollary \ref{aa2}). 

  Proving  our   estimates for $DP_t\varphi  = D_xP_t\varphi$  requires some work  because, due to the  growth  of the derivatives of $b$ and $\sigma$,   we cannot exploit the classical   Bismut--Elworthy--Li formula, see \cite{El92}. Indeed under our assumptions we do not expect to have $L^2$-estimates for the derivative $D_x X(t,x)$ which appears in the  classical  Bismut--Elworthy--Li formula (cf. Lemma \ref{l1.2}).   Such $L^2-$estimates would lead to uniform gradient estimates which do not hold in general (cf. Section 6).

We  prove   weighted gradient estimates   inspired by   \cite{DaDe03}, \cite{DaDe07}  and  \cite{DaDe14},  introducing a suitable   potential $V$ related to  Hypothesis \ref{h1}:
\begin{equation} \label{gee}
V(x)=f(1+|x|^{2}),\;\;\; x \in \R^d,
\end{equation}
  and studying the corresponding Feynman--Kac semigroup
 \begin{equation}
\label{a1}
S_t\varphi(x)=\E \Big [\varphi(X(t,x))\,e^{-\int_0^tV(X(s,x))\,ds} \Big],\;\;\; t \ge 0, \; x \in \R^d.
\end{equation}
More precisely, the Feynman--Kac semigroup we consider  
 is a non-standard  regular approximation of $S_t$ (cf. Sections 3 and 4.2). 
 We shall first  prove gradient estimates for $ \langle DS_t\varphi(x),h   \rangle$   then we will return to  $ \langle DP_t\varphi(x),h   \rangle$, using   the identity
\begin{equation}
\label{a2}
P_t\varphi(x)=S_t\varphi(x)+\int_0^t S_{t-s}(V P_s\varphi)(x)\,ds
\end{equation}
which follows from the variation of constants formula,  
 see Section 4.3. Indeed  denoting by $\mathcal L$ and
$\mathcal K$ the 
generators of $P_t$ and $S_t$ respectively, we have
$$
\mathcal L=\mathcal K+V. 
$$ 
 In order to estimate
 $ \langle DS_t\varphi(x),h   \rangle$  
 we  deal with a probabilistic formula for the gradient of  $S_t \varphi$ which holds when $b$, $\sigma$ and $V$ are  Lipschitz $C^1$-functions (see  \cite{DaZa97}).  
  This is why we introduce new regular
approximations for $b$, $\sigma $ and $V$ in Section 3. Finally, we   use a local regularity result as in \cite{MPW}  and 
 a localization argument  to pass from Lipschitz estimates for the approximating SDEs with regular   coefficients to Lipschitz estimates  for  the general SDE satisfying our assumptions (cf. Section 5). Our main result is the following:
\begin{Theorem} \label{mai} Under Hypothesis \ref{h1}, there exists $c= c(c_0, \gamma,M_0, L,   d) >0$ such that for any  $\varphi \in C_b(\R^d)$,  we have
\begin {equation} \label{dop1}
{ \sqrt{t \nu} }  \, |D_x P_t \varphi (x) | \le 
c \, \, { V^{2}(x)}\| \varphi\|_{\infty}, \;\;\;  \text{$t \in (0,1]$,  $x \in \R^d.$}
\end{equation}
\end{Theorem}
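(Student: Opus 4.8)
\medskip
\noindent\textbf{Proof strategy.} The plan is to reduce the statement, via a regularization of the coefficients, to the case of Lipschitz $C^1$ coefficients, where a Bismut--Elworthy--Li type formula for the Feynman--Kac semigroup $(S_t)$ of \eqref{a1} is available (\cite{DaZa97}); to prove for $(S_t)$ the \emph{one--power} weighted estimate $\sqrt{t\nu}\,|D_xS_t\varphi(x)|\le c\,V(x)\|\varphi\|_\infty$; and then to recover $D_xP_t\varphi$ from $D_xS_t\varphi$ through the variation of constants identity \eqref{a2}, which costs one more power of $V$. The structural fact driving everything is that, although $D_xX(t,x)$ is not $L^2$--bounded under Hypothesis \ref{h1}, the \emph{reweighted} derivative flow $\zeta(t):=e^{-\int_0^t V(X(s,x))\,ds}\,D_xX(t,x)h$ is: by It\^o's formula and (H2), which bounds $2|Db|+\sum_i|D\sigma_i|^2$ by $V$, the process $e^{-2\lambda\int_0^t V(X(s,x))\,ds}|D_xX(t,x)h|^2$ is a supermartingale for every $\lambda\ge1$, so $\E|\zeta(t)|^2\le|h|^2$, and for $\lambda>1$ one also gets $\E\int_0^t V(X(s,x))\,e^{-2\lambda\int_0^s V(X(r,x))\,dr}|D_xX(s,x)h|^2\,ds\le c_\lambda|h|^2$. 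Hypothesis (H1) will enter only through the $L^p$--moment bounds $\E[V(X(s,x))^p]\le C_p\,V(x)^p$, $s\in[0,1]$, of Proposition~\ref{l2.1} and Corollary~\ref{aa2}, and (H3) only through $\|\sigma^{-1}\|\le\nu^{-1/2}$.

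\medskip
\noindent\textbf{Step 1 (gradient estimate for $S_t$, Lipschitz case).} Differentiating in $x$ the flow identity $S_t\varphi(x)=\E[e^{-\int_0^s V(X(r,x))\,dr}\,S_{t-s}\varphi(X(s,x))]$ and running a standard martingale/integration-by-parts argument one arrives at a formula of the form
$$
D_xS_t\varphi(x)\cdot h=\frac1t\,\E\Big[\varphi(X(t,x))\,e^{-\int_0^t V(X(s,x))\,ds}\int_0^t\langle\sigma^{-1}(X(s,x))\,D_xX(s,x)h,\,dW(s)\rangle\Big]-\frac1t\int_0^t\Phi(s)\,ds,
$$
with $\Phi(s)=\E\big[e^{-\int_0^s V(X(r,x))\,dr}\,S_{t-s}\varphi(X(s,x))\int_0^s\langle D_xV(X(r,x)),\,D_xX(r,x)h\rangle\,dr\big]$. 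For the Bismut term I would set $B_t:=e^{-\int_0^t V(X(s,x))\,ds}\int_0^t\langle\sigma^{-1}(X(s,x))D_xX(s,x)h,\,dW(s)\rangle$; since $t\mapsto e^{-\int_0^t V(X(s,x))\,ds}$ has finite variation, by It\^o's product rule $B$ solves $dB_s=-V(X(s,x))B_s\,ds+\langle\sigma^{-1}(X(s,x))\zeta(s),\,dW(s)\rangle$, hence $B_t=\int_0^t e^{-\int_r^t V(X(\rho,x))\,d\rho}\langle\sigma^{-1}(X(r,x))\zeta(r),\,dW(r)\rangle$, and It\^o's isometry together with the reweighted-flow bound gives $\E[B_t^2]\le\nu^{-1}\int_0^t\E|\zeta(r)|^2\,dr\le t\,\nu^{-1}|h|^2$; thus this term is $\le(t\nu)^{-1/2}\|\varphi\|_\infty|h|$, \emph{with no weight}. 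For $\Phi$ I would use $|S_{t-s}\varphi|\le\|\varphi\|_\infty$, the inequality $e^{-\int_0^s V}\le e^{-\int_0^r V}$ for $r\le s$, the elementary bound $|D_xV(x)|\le 2c_0(1+|x|^2)^{1/2-\gamma}V(x)\le 2c_0V(x)$ (here $\gamma\ge1/2$ is used), and then Cauchy--Schwarz together with $\E[e^{-2\int_0^r V}|D_xX(r,x)h|^2]\le|h|^2$ and $\E[V(X(r,x))^2]\le C\,V(x)^2$, obtaining $|\Phi(s)|\le C\,s\,V(x)\|\varphi\|_\infty|h|$. Combining the two contributions, $\sqrt{t\nu}\,|D_xS_t\varphi(x)|\le c\,V(x)\|\varphi\|_\infty$ for $t\in(0,1]$.

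\medskip
\noindent\textbf{Step 2 (from $S_t$ to $P_t$, and removing regularity).} Differentiating \eqref{a2} gives $D_xP_t\varphi(x)=D_xS_t\varphi(x)+\int_0^t D_x[S_{t-s}(VP_s\varphi)](x)\,ds$, and the first summand is controlled by Step 1. For the integrand I would apply the formula of Step 1 with $\psi=VP_s\varphi$ in place of $\varphi$: since $\|P_s\varphi\|_\infty\le\|\varphi\|_\infty$, the factor $\psi(X(t-s,x))\,e^{-\int V}$ now carries an extra $V(X(t-s,x))$, which (H1) lets me estimate in $L^2$ by $C\,V(x)$, and similarly $S_{t-s-\rho}(VP_s\varphi)(\cdot)$ is bounded by $C\,V(\cdot)\|\varphi\|_\infty$ in the correction term; one further Cauchy--Schwarz then gives $|D_x[S_{t-s}(VP_s\varphi)](x)|\le c\big((t-s)^{-1/2}+1\big)\,V^2(x)\|\varphi\|_\infty$, and integrating over $s\in(0,t)$ (with $\int_0^t(t-s)^{-1/2}\,ds=2\sqrt t$) and multiplying by $\sqrt{t\nu}$ yields $\sqrt{t\nu}\,|D_xP_t\varphi(x)|\le c\,V^2(x)\|\varphi\|_\infty$, first for $\varphi\in C^1_b(\R^d)$ and then, by approximation (the right-hand sides of the probabilistic formulas are $\|\cdot\|_\infty$-continuous in $\varphi$), for every $\varphi\in C_b(\R^d)$. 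To remove the Lipschitz assumption I would carry out Steps 1--2 for the approximations $b_n,\sigma_n,V_n$ of Section 3, which are Lipschitz and $C^1$, coincide with $b,\sigma,V$ on $B(0,n)$, and satisfy Hypothesis \ref{h1} with constants independent of $n$; this produces $\sqrt{t\nu}\,|D_xP^n_t\varphi(x)|\le c\,V_n^2(x)\|\varphi\|_\infty$ with $c=c(c_0,\gamma,M_0,L,d)$ uniform in $n$. Since for $|x|<R<n$ the functions $P^n_t\varphi$ are uniformly bounded and solve on $B(0,R)$ the same parabolic equation $\partial_t u=\mathcal L u$, the interior regularity estimates used in \cite{MPW} make $\{D_xP^n_t\varphi\}$ locally equibounded and equicontinuous on $(0,1]\times B(0,R)$, while $P^n_t\varphi\to P_t\varphi$ locally uniformly (non-explosion follows from (H1)); passing to the limit gives \eqref{dop1}.

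\medskip
\noindent\textbf{Expected main difficulty.} Two points require care. First, the approximation in Section 3 must preserve the fine balance between the growth of $b,\sigma$ and that of their first derivatives encoded in (H1)--(H2), with constants \emph{uniform in $n$}: a plain truncation $\chi(\cdot/n)b,\ \chi(\cdot/n)\sigma$ spoils (H2) through the derivative of the cutoff and may spoil (H1) through the term $\langle b_n(x),x\rangle$, so a more careful, $f$--adapted construction is needed. Second, one must place the weight coming from the potential at the right spots in the Bismut--Elworthy--Li formula --- it is the reweighted flow $\zeta$, not $D_xX(\cdot,x)h$, that is $L^2$--bounded --- and then propagate this through the correction term $\Phi$, which contains the large quantity $D_xV$; this is exactly where the moment estimates of Proposition~\ref{l2.1} (that is, Hypothesis (H1)) and the bound $|D_xV|\le 2c_0V$ (requiring $\gamma\ge1/2$) enter.
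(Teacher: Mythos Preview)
Your proposal is correct and follows essentially the same architecture as the paper: (i) work with the Lipschitz approximations $b_n,\sigma_n,V_n$ of Section~3; (ii) prove the reweighted--flow bound $\E[e^{-\int_0^t V_n}\,|\eta_n^h|^2]\le|h|^2$ (this is Lemma~\ref{l1.2}); (iii) use it in a Bismut--Elworthy--Li formula for $S_t^n$ to obtain $\sqrt{t\nu}\,|DS_t^n\varphi|\le c\,V\,\|\varphi\|_\infty$ (Lemma~\ref{l1.3}); (iv) pass to $P_t^n$ via \eqref{a21}, picking up the second power of $V$ through the moment bounds of Corollary~\ref{aa2}; (v) let $n\to\infty$ using the local regularity of~\cite{MPW}. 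Two minor remarks. First, your estimate $\E[B_t^2]\le t\nu^{-1}|h|^2$ is correct, but the integrand $e^{-\int_r^t V}$ in your variation--of--constants representation of $B_t$ is anticipating, so ``It\^o's isometry'' does not apply directly; the clean way (and the paper's way) is to compute $d(B_t^2)$ by It\^o's formula and drop the nonpositive drift $-2V\,B^2\,dt$. Second, your limit step differs slightly from the paper's: you propose to pass $D_xP_t^n\varphi\to D_xP_t\varphi$ directly via interior Schauder equicontinuity (valid, since $\mathcal L_n=\mathcal L$ on $B(0,R)$ for $n>R$), whereas the paper first converts the gradient bound into the Lipschitz bound $|P_t^n\varphi(x+h)-P_t^n\varphi(x)|\le c|h|V^2(x)\|\varphi\|_\infty/\sqrt{t\nu}$, passes \emph{this} to the limit by weak convergence of the coupled process $(X_n(t,x+h),X_n(t,x))$, and only then invokes~\cite{MPW} to recover differentiability of $P_t\varphi$. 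The paper's route avoids proving convergence of derivatives; yours is a bit more direct. Both work.
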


 
Below, we make some comments. 
\smallskip 

\begin{Remark} \label{cases} {\em (i) Choosing $\gamma=\tfrac12$, we can consider the case when 
$a(x)$  is globally bounded and there exists $c>0$ such that $\langle b(x),x \rangle$ $\le c$, for any $x \in \R^d$.
 In this case $f(t) =M e^{2 c_0 \sqrt{t}}$
and 
 (H2) becomes   
$$
{2}|Db(x) h |  + \sum_{i=1}^d | D\sigma_i(x) h |^2 \le C  \, e^{2c_0\, \sqrt{1+|x|^{2}}}, \quad\;  
x \in \R^d,
$$ 
$\,h\in \R^d,$ $|h|=1$;
 our gradient estimates are
\begin{gather*} 
{\sqrt{t \nu}}  \, |D_x P_t \varphi (x) | \le c \, e^{4c_0 \, \sqrt{1+|x|^{2}}}
\,  \, \| \varphi\|_{\infty}, \;\;\;  \text{$t \in (0,1]$, $\varphi \in C_b (\R^d)$, $x \in \R^d.$}
\end{gather*}
  }
\end{Remark}

\begin{Remark} \label{serv1} {\em 
 We stress that  $V(x) = f(1+ |x|^2)$ is a Lyapunov function for the Kolmogorov operator ${\cal L}$  associated to our SDE. More precisely, by (H1) even $V^4(x)$ is a Lyapunov function. This fact will be  used in Proposition \ref{l2.1} and Corollary \ref{aa2}. In our approach we will also use that (cf. \eqref{f44})
$$
|V'(x)| \le C |V(x)|, \;\; 
x \in \R^d.
$$ 
}
\end{Remark}

   We end the section with some notations. We denote by  $|\cdot|$ and  $\langle\cdot,\cdot \rangle$  or $\cdot$ the norm and the inner product in  ${\R^d},$ $d\ge 1$.  We indicate
by  $L({\R^d})$   the space of all $d \times d$-real matrices. If $A \in L({\R^d})$ then $Tr(A)$ indicates the trace of $A$. We use the Hilbert-Schmidt norm $\| \cdot \|$ on $L({\R^d}).$
Moreover, $C_b({\R^d})$   is  the space of all real continuous   and bounded mappings   $\varphi\colon {\R^d}\to \R$ endowed with the sup norm 
  $$\|\varphi \|_{\infty}=\sup_{x\in  {\R^d}}\,|\varphi(x)|;
  $$
  whereas $C^k_b({\R^d})$, $k \ge 1,$ is  the space of all real functions which are continuous and bounded together with their derivatives 
  of order less or equal than $k$. 
     Finally,   $ B_b({\R^d})$ is the space of  all  real,  bounded   and Borel mappings on ${\R^d}$.\smallskip

 We denote by $C$ or $c$   generic positive constants that might change from line to line and that may depend on  $c_0$, $\gamma,$ $M_0,$ $L$ and   $d$ if it is not specified.

\section{Preliminary  estimates for the   solution $X(t,x)$  }

We first present  estimates for  $\E[V(|X(t,x))]$. Recall that in Theorem 4.1 of \cite{Ma08} estimates for $\E(|X(t,x)|^{2m})$ are given.  
 Here  we are considering more general estimates (for instance,   if $\gamma = 1/2$ we are establishing exponential type estimates because in this case $f(s) \sim e^{2 c_0 \sqrt{s}}$). Estimates as in \cite{Ma08} are obtained by choosing $\gamma =1$.

  Note that condition \eqref{e1.216} is weaker than (H1) which has $8 c_0$ instead of $2 c_0$.
\begin{Proposition}
\label{l2.1}  Let $\gamma \in [1/2,1]$ and $f$ as in \eqref{w55}. Moreover suppose that
\begin{equation}
\label{e1.216}
2 \langle b(x),x   \rangle  +  Tr(a(x) ) \,  + \,   
\tfrac{2c_0 }{(1+ |x|^2)^{\gamma}}
\langle a(x)x,x   \rangle \le {L}  (1+ |x|^2)^{\gamma},\quad x\in \R^d,
\end{equation}
for some $L>0$. Then the solution $X(t,x)$  verifies
\begin{equation}
\label{e2.1}
\E[f \big( |X(t,x)|^{2} + 1 \big)]\le   e^{   { c_0 L}\, t} f(|x|^{2} +1),
\quad \;x\in {\R^d},\;t\ge 0.
\end{equation}
\end{Proposition}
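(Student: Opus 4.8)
The plan is to exhibit $\psi(x):=f(1+|x|^{2})$ as a supersolution of the Kolmogorov equation, up to the exponential factor $e^{c_0 L t}$, and then to conclude by It\^o's formula together with a standard localization argument. Since $s\mapsto s^{-\gamma}$ is smooth on $[1,\infty)$, the function $f$ in \eqref{w55} is of class $C^{\infty}$ on $[1,\infty)$ and solves the first order ODE $f'(s)=c_0\,s^{-\gamma}f(s)$, $s\ge 1$; as $1+|x|^{2}\ge 1$ it follows that $\psi\in C^{2}(\R^d)$, with $D\psi(x)=2f'(1+|x|^{2})\,x$ and $D^{2}\psi(x)=4f''(1+|x|^{2})\,x\otimes x+2f'(1+|x|^{2})\,I$.

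First I would compute, for the diffusion generator $\mathcal L\psi(x)=\langle b(x),D\psi(x)\rangle+\tfrac12 Tr(a(x)D^{2}\psi(x))$,
\[
\mathcal L\psi(x)=f'(1+|x|^{2})\big(2\langle b(x),x\rangle+Tr(a(x))\big)+2f''(1+|x|^{2})\,\langle a(x)x,x\rangle .
\]
Differentiating the ODE gives $f''(s)=c_0\,s^{-\gamma}f(s)\,(c_0\,s^{-\gamma}-\gamma\,s^{-1})$, and since $s\ge 1$ and $\gamma\ge 0$ we have $\gamma\,s^{-1}\ge 0$, hence the elementary bound $f''(s)\le c_0^{2}\,s^{-2\gamma}f(s)$. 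Using this, the positivity $\langle a(x)x,x\rangle\ge 0$ (recall $a=\sigma\sigma^{*}$), and $f'(1+|x|^{2})=c_0(1+|x|^{2})^{-\gamma}f(1+|x|^{2})$, I would factor out $c_0(1+|x|^{2})^{-\gamma}f(1+|x|^{2})>0$ to get
\[
\mathcal L\psi(x)\le c_0\,(1+|x|^{2})^{-\gamma}f(1+|x|^{2})\Big(2\langle b(x),x\rangle+Tr(a(x))+\tfrac{2c_0\,\langle a(x)x,x\rangle}{(1+|x|^{2})^{\gamma}}\Big).
\]
By hypothesis \eqref{e1.216} the last parenthesis is $\le L(1+|x|^{2})^{\gamma}$, so the factors $(1+|x|^{2})^{\pm\gamma}$ cancel and we obtain the differential inequality $\mathcal L\psi(x)\le c_0 L\,\psi(x)$ for all $x\in\R^d$. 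This is exactly the place where the weaker constant $2c_0$ in \eqref{e1.216} (rather than $8c_0$ as in (H1)) is precisely what is needed, and also explains the exponent $c_0 L$ appearing in \eqref{e2.1}.

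Finally I would apply It\^o's formula to $t\mapsto e^{-c_0 L t}\psi(X(t,x))$; its drift equals $e^{-c_0 L t}\big(\mathcal L\psi(X(t,x))-c_0 L\,\psi(X(t,x))\big)\le 0$, so the process is a nonnegative local supermartingale. Localizing along the exit times $\tau_n=\inf\{t\ge 0:\ |X(t,x)|\ge n\}$ makes the stopped process a bounded genuine supermartingale, whence $\E\big[e^{-c_0 L(t\wedge\tau_n)}\psi(X(t\wedge\tau_n,x))\big]\le\psi(x)$; since \eqref{e1.216} implies the non-explosion condition \eqref{111} (the extra term is nonnegative and $(1+|x|^{2})^{\gamma}\le 1+|x|^{2}$), we have $\tau_n\uparrow\infty$ a.s., and letting $n\to\infty$ with Fatou's lemma gives $\E\big[e^{-c_0 L t}\psi(X(t,x))\big]\le\psi(x)$, i.e.\ $\E[f(|X(t,x)|^{2}+1)]\le e^{c_0 L t}f(|x|^{2}+1)$, which is \eqref{e2.1}. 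There is no genuine obstacle in this argument: the only points that require care are the sign of the term dropped from $f''$ and the clean cancellation of the weights, both settled above, and the routine localization/Fatou passage to the limit.
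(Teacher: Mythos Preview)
Your proof is correct and follows essentially the same approach as the paper: both apply It\^o's formula to $\psi(X(t))=f(1+|X(t)|^{2})$, use the identical bound $f''(s)\le c_0^{2}s^{-2\gamma}f(s)$ together with \eqref{e1.216} to obtain $\mathcal L\psi\le c_0 L\,\psi$, and then localize along the exit times $\tau_n$. The only cosmetic difference is that the paper takes expectations first and concludes via Gronwall, whereas you package the same inequality as a supermartingale for $e^{-c_0Lt}\psi(X(t))$ and pass to the limit with Fatou; these are equivalent standard routes.
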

\begin{proof} Recalling \eqref{gee}  
 in the sequel we set  
\begin{equation} \label{cit1}
\text{ $g(t) = t^{\gamma}$ so that} \;\; f(t) =  M_0  e^{c_0 \int_1^t \frac{1}{g(s)} \, ds}, \;\;\; t \ge 1. 
\end{equation}
Let us fix $t>0$.
 We  apply It\^o's formula setting $ X(t,x) =X(t)$
\begin{gather*}
V(X(t)) = V(x) +  \int_0^t 2 f'(1 + |X(s)|^2)^{} X(s) \cdot b(X(s)) ds 
\\
 \nonumber
+ 2 \int_0^t f'(1 + |X(s)|^2) \; X(s) \cdot \sigma(X(s)) dW(s) 
\\ \nonumber
+   2\int_0^t f''(1 + |X(s)|^2)^{} \,  |\sigma^*(X(s)) X(s)|^2 ds
\\
+  \, \int_0^t f'(1 + |X(s)|^2)^{} \| \sigma(X(s)) \|^2 ds. 
\end{gather*}
Using that 
$$
f'(s) = \tfrac{c_0 f(s)}{g(s)},\;\;\; 
f''(s) \le f(s) \tfrac{c_0^2 }{g^2(s)} = f'(s) \tfrac{c_0}{g(s)},\;\; s \ge 1,
$$
we obtain 
\begin{gather*}
V(X(t)) \le  V(x) +  2\int_0^t f'(1 + |X(s)|^2)^{} \; X(s) \cdot \sigma(X(s)) dW(s)
\\
 +  \Big [ \int_0^t f'(1 + |X(s)|^2)
  \Big\{ 2 X(s) \cdot b(X(s)) 
+   
 \tfrac{2 c_0 }{g(1 + |X(s)|^2)^{}} \,  |\sigma^*(X(s)) X(s)|^2    \\ 
 + \, \| \sigma(X(s)) \|_{}^2 \Big \}ds \Big] 
\\
\le V(x) + 2\int_0^t f'(1 + |X(s)|^2)^{} \; X(s) \cdot \sigma(X(s)) dW(s)
\\
 + \,  {L} \int_0^t f'(1 + |X(s)|^2) g(1 + |X(s)|^2) ds
\\
\le V(x) + 2\int_0^t f'(1 + |X(s)|^2)^{} \; X(s) \cdot \sigma(X(s)) dW(s)+ c_0 {L}  \int_0^t V(X(s))   ds. 
\end{gather*}
Using the  stopping times 
$
\tau_n = \tau_n(x) $ $ = \inf \{t \ge 0 \; :\; |X(t)| \ge n \}
$
and taking the expectation  we get 
$$
\E[ V (X (t \wedge \tau_n )] \le V(x)  
 +  c_0 {L}  \int_0^t \E [ V (X(s \wedge \tau_n ) ) ]    ds. 
$$ 
By the Gronwall lemma we get an estimate for  $\E [ V (X(t \wedge \tau_n ) )]$; letting $n \to \infty$  (note that  $\tau_n \uparrow \infty$ because condition \eqref{111} holds) we 
get the assertion.
\end{proof}

\begin{Corollary}
\label{aa2}   Let $\gamma \in [1/2,1]$ 
and $f$ as in \eqref{w55}.
 Moreover suppose that
\begin{equation}
\label{e1.21}
2 \langle b(x),x   \rangle  +  Tr(a(x) ) \,  + \,   
\tfrac{8 c_0}{(1+ |x|^2)^{\gamma}}
\langle a(x)x,x   \rangle \le {L}  (1+ |x|^2)^{\gamma},\quad x\in \R^d.
\end{equation}
 Then the solution $X(t,x)$  verifies
\begin{equation}
\label{e2.1a} \E[V^4(X(t,x))]=
\E[f^4 \big (|X(t,x)|^{2} + 1 \big)]\le   e^{  4 c_0 {L} t} \, V^4(x),
\quad x\in {\R^d},\;t\ge 0.
\end{equation}
\end{Corollary}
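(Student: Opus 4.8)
The plan is to deduce the estimate from Proposition~\ref{l2.1} via the elementary remark that $f^{4}$ is again a weight of the form \eqref{w55}. Indeed, by \eqref{w55},
\[
\tilde f(t) := f^{4}(t) = M_0^{4}\, e^{\,4 c_0 \int_1^t s^{-\gamma}\, ds},\qquad t\ge 1,
\]
so $\tilde f$ obeys \eqref{w55} with the same exponent $\gamma$ but with $M_0$ replaced by $\tilde M_0 = M_0^{4}>0$ and $c_0$ replaced by $\tilde c_0 = 4 c_0$, which still satisfies $\tilde c_0\ge 1$ since $c_0\ge 1$.

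First I would observe that the hypothesis of Proposition~\ref{l2.1} written for $\tilde f$, namely \eqref{e1.216} with $2\tilde c_0$ in place of $2c_0$, reads
\[
2 \langle b(x),x\rangle + Tr(a(x)) + \tfrac{8 c_0}{(1+|x|^2)^{\gamma}}\langle a(x)x,x\rangle \le L\,(1+|x|^2)^{\gamma},\qquad x\in\R^d,
\]
which is exactly the standing assumption \eqref{e1.21}. (This is precisely why (H1) and \eqref{e1.21} carry the constant $8c_0 = 2\cdot 4c_0$ rather than $2c_0$: the extra room anticipates the fourth power.) One then only has to check that $\tilde f$ still satisfies the two differential inequalities used inside the proof of Proposition~\ref{l2.1}, namely $\tilde f'(s) = \tilde c_0\, \tilde f(s)/g(s)$ and $\tilde f''(s)\le \tilde c_0\, \tilde f'(s)/g(s)$ with $g(s)=s^{\gamma}$; both are immediate from $\tilde f = f^{4}$, the analogous relations for $f$, and the monotonicity of $g$ on $[1,\infty)$.

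Applying Proposition~\ref{l2.1} with $\tilde f$ and $\tilde c_0$ in place of $f$ and $c_0$ then yields
\[
\E\big[\tilde f\big(|X(t,x)|^2+1\big)\big] \le e^{\tilde c_0 L t}\, \tilde f\big(|x|^2+1\big),\qquad x\in\R^d,\ t\ge 0,
\]
which, recalling $\tilde f = f^{4}$ and $V(x)=f(1+|x|^2)$, is exactly \eqref{e2.1a}.

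I do not expect any real obstacle here; the one point deserving a line of verification is that $f^{4}$ genuinely lies in the class of weights handled by Proposition~\ref{l2.1}, which, as noted, is guaranteed by the stronger coefficient $8c_0$ in (H1). As an alternative to the reduction, one could rerun the It\^o computation of Proposition~\ref{l2.1} directly on $V^{4}$: differentiating the fourth power brings out a factor $4 f^{3}(1+|X(s)|^{2})$, and it is exactly this factor that converts the effective coefficient $2c_0$ into $8c_0$. This route, however, merely re-proves Proposition~\ref{l2.1} and is less economical.
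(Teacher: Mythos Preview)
Your proposal is correct and follows essentially the same route as the paper: define $\tilde f = f^{4}$, observe that this is again of the form \eqref{w55} with $c_0$ replaced by $4c_0$, note that the hypothesis \eqref{e1.216} of Proposition~\ref{l2.1} with $2(4c_0)=8c_0$ is precisely \eqref{e1.21}, and apply Proposition~\ref{l2.1}. The paper's proof is a one-line version of exactly this argument; your additional remarks (the check of the differential inequalities for $\tilde f$ and the alternative direct It\^o computation on $V^4$) are accurate but not needed beyond what the paper records.
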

\begin{proof}  Taking into account that \eqref{e1.21} is like 
 \eqref{cit1} with $c_0$ replaced by $4 c_0$, we define $\tilde f(t) $ $= M_0^4  e^{4 c_0 \int_1^t \tfrac{1}{g(s)} ds}, $ $ t \ge 1,$ and note that $\tilde f (t) = f^4(t)$. By  the previous proposition we obtain easily the result.
\end{proof}

\begin{Remark}{\em   The previous integral estimates \eqref{e2.1} and \eqref{e2.1a}  hold more generally  with a function $f : [1, \infty) \to \R $ as in  \eqref{cit1} and a corresponding      
$C^1$-function  $g: [1, \infty) \to \R$
such that 
$
1 \le g(s) \le s, $ for $
s \ge 1. $
\\
In such  case the assumptions must be changed replacing $(1+ |x|^2)^{\gamma}$ with $g(1+ |x|^2)$. For instance, assumption 
 \eqref{e1.21} becomes
 \begin{equation} \label{r666}
2 \langle b(x),x   \rangle  +  Tr(a(x) ) \,  + \,   
\tfrac{8c_0 }{g(1+ |x|^2)}
\langle a(x)x,x   \rangle \le {L}  g(1+ |x|^2),\quad x\in \R^d.
\end{equation}
One  could consider 
the more general condition \eqref{r666} instead of (H1) but then  
it is not clear how
to obtain the results 
of Section 3 with $g(t)$ instead of $t^{\gamma}$ (see in particular \eqref{ftt}). }
\end{Remark}

\section{ Regular approximations with bounded derivatives for $b$, $\sigma$ and $V$}

 Here we introduce Lipschitz $C^1$-approximations   of $b$, $\sigma$ and $V= f(1 + |\cdot|^2)$ which  satisfy hypotheses (H1), (H2) and (H3), possibly replacing $M_0$ and  $L$ with $cM_0$ and  $cL$ for some $c>0$ independent of $n$. 
Note that usual approximations for operators with unbounded coefficients do not work  (see, in particular,   \cite{L} and \cite{Ce}). 

Let $\eta \in C_0^{\infty}(\R^d)$ be a function such that $0 \le \eta(x) \le 1$, $x \in \R^d$, and $\eta(x)=1$ for $|x| \le 1$. Moreover, $\eta(x) =0 $ for $|x|\ge 2.$
Let us define the  $C^{\infty}$-mappings $\Phi_n : \R^d \to \R^d$,
 \begin{equation} \label{w77}
\Phi_n(y) = \frac{y}{\sqrt{ 1+ \big(1- \eta(\tfrac{y}{n}) \big) \, \tfrac{|y|^2}{n^2} }},\;\;\; y \in \R^d,\;\; n \ge 1.
\end{equation}
Note that $\Phi_n(y) =y$ if $|y| \le n$. Moreover, $|\Phi_n(y)| \le  |y|$, if $n \le |y| \le 2n$ and  $|\Phi_n(y)| \le  n$ if $|y| > 2n.$ Hence,
 for any $y \in \R^d$, we get
\begin{equation} \label{244}
 |\Phi_n(y)| \le  |y| \wedge 2n,\;\;\; n \ge 1.
\end{equation}
In addition, for any $y,h \in \R^d,$ when $n=1$,
$$
D\Phi_1(y)[h] = \tfrac{h}{\sqrt{ 1+ \big(1- \eta(y) \big) \, |y|^2 }}
$$$$
 - \tfrac{ y}{2}  \big (1+ \big(1- \eta(y) \big) \, |y|^2 \big)^{-3/2} \big [ - \langle D\eta(y) , h \rangle |y|^2  +  2\big(1- \eta(y) \big) \langle y , h \rangle\big].
$$
Hence, considering as before the cases $|y| \le 1$, $1 < |y| \le 2$ and $|y|>2$, we find that there exists $c = c(\| D\eta\|_{\infty})>0$ such that 
$$
\sup_{y \in \R^d} | D\Phi_1(y)[h] | \le c |h|, \;\;\; h \in \R^d.
$$ 
Writing $\Phi_n(y) = n \Phi_1(\tfrac{y}{n})$ we find that, for any $n \ge 1$, 
\begin{equation} \label{2ee1}
\sup_{y \in \R^d} | D\Phi_n(y)[h] | \le c \, |h|, \;\;\; h \in \R^d.
\end{equation}
  Recalling  that  $\| \Phi_n \|_{\infty} \le 2n$, we  consider the globally Lipschitz and bounded $C^1$-coefficients
\begin{equation} \label{bb1}
 b_n(x) = b(\Phi_n(x)),\;\;\; \sigma_n(x)= \sigma(\Phi_n(x)), \;\; x \in \R^d, \; n \ge 1.
\end{equation}
Note that, for any $n \ge 1$,
\begin{equation} \label{2ee2}
 b(x)= b_n(x),\;\; \sigma_n(x) =\sigma(x),
\; |x|\le n,\,\,\, 
\;\; x \in \R^d.
\end{equation}
Each $\sigma_n $ satisfies (H3) with the same $\nu$. Concerning (H1)
 we clearly have for $|x| \le n$:
$$
2 \langle b_n(x),x   \rangle + \;\;  Tr(a_n(x) ) \,  + \,   
  \, \tfrac{ 8 c_0 \, \langle a_n(x) x,x   \rangle }{ (1+ |x|^2)^\gamma}
 \le  L \,  (1+ |x|^2)^\gamma,
$$
where  $a_n(x) = \sigma_n(x) \sigma_n^*(x).$  Let us treat now the case
$|x|>n$. 

We can only consider the  case when $\langle b_n(x),x   \rangle >0.$ We find
\begin{gather*}
2 \langle b_n(x),x   \rangle = 2 \langle b(\Phi_n(x)),  \Phi_n(x)  \rangle 
{\sqrt{ 1+ \big(1- \eta(\tfrac{x}{n}) \big) \, \tfrac{|x|^2}{n^2} }}
\\ \le 
2 \langle b(\Phi_n(x)),  \Phi_n(x)  \rangle
\Big (1 + \tfrac{ |x|}{n} \Big),
\end{gather*}
and, similarly, 
\begin{gather*}
Tr(a_n(x) ) + \tfrac{ 8 c_0 \, \langle a_n(x) x,x\rangle} {(1+ |x|^2)^\gamma }  
\\ = 
 Tr(a_n(x)) + \, { 8 c_0 \, \langle a (\Phi_n(x)) \Phi_n(x) , \Phi_n(x)   \rangle }\,   \Big ({ 1+ \big(1- \eta(\tfrac{x}{n}) \big) \, \tfrac{|x|^2}{n^2} }\Big) \tfrac{1}{(1+ |x|^2)^{\gamma}}
\\
 \le  Tr(a(\Phi_n(x)) + 8 c_0
\langle a (\Phi_n(x)) \Phi_n(x), \Phi_n(x)\rangle \, (1+ \tfrac{ |x|^2}{n^2} ) \tfrac{1}{(1+ |x|^2)^{\gamma}}
\\
\le   Tr(a(\Phi_n(x))) + 8 c_0
\langle a (\Phi_n(x)) \Phi_n(x), \Phi_n(x)   \rangle \, (1+  \tfrac{ |x|^2}{n^2}) \tfrac{1}{|x|^{2\gamma}}
\\
\le (\tfrac{ |x|^{2(1-\gamma)} }{n^{2(1-\gamma)}} + 1)\, Tr(a(\Phi_n(x))) + 8 c_0
\langle a (\Phi_n(x)) \Phi_n(x), \Phi_n(x)   \rangle \, (\tfrac{ |x|^{2(1-\gamma)} }{n^{2(1-\gamma)}} +1).
\end{gather*}
Hence, when  
$|x| > n$  we have, since $\gamma \in [1/2,1]$, 
\begin{gather*}
2 \langle b_n(x),x   \rangle + 
Tr(a_n(x) ) + \tfrac{ 8 c_0 \, \langle a_n(x) x,x\rangle} {
(1+ |x|^2)^{\gamma} } 
\\
\le  c L\,  (1+ n^{2\gamma})\,\tfrac{ |x|}{n} +  c L \,  (1+ n^{2\gamma})\, \tfrac{ |x|^{2(1-\gamma)} }{n^{2(1-\gamma)}} 
\le  
2c L\,  ({ |x|^{2 - 2 \gamma}} + n^{4\gamma -2} |x|^{2 - 2 \gamma})
\\ 
\le  2c L\,  ( |x|^{2 - 2 \gamma} + |x|^{2\gamma }) \le 2 c L \, (1 + |x|^{2})^\gamma ,
\end{gather*}
since $|x|^{2(1-\gamma)} \le |x|^{2 \gamma}$ for $|x| \ge 1$, where $c>0$ is independent of $n$.

Hence coefficients $b_n$ and $\sigma_n$ satisfy (H1) with $L$ replaced by $C L$, 
 for some constant $C>0$ independent of $n$, i.e., we have:
 \begin {equation} \label{ftt}
2 \langle b_n(x),x   \rangle  +  Tr(a_n(x) ) \,  + \,   
  \, \tfrac{ 8 c_0 \, \langle a_n(x) x,x   \rangle }{(1+ |x|^2)^\gamma}
 \le  C L \,  (1+ |x|^2)^\gamma.
\end{equation}
Let us consider  (H2). If $|x|<n$ we get:
 $Db_n(x) = Db(x),$ $D(\sigma_n)_i(x) = D\sigma_i(x).$  Let now  $|x| \ge n$. We will use that if $y, k \in \R^d$, $k \not =0$, then
$$
{2}|Db(y) k|  + \sum_{i=1}^d | D\sigma_i(y) k |^2 
=   
{2}|Db(y) \tfrac{k}{|k|}| |k|  + \sum_{i=1}^d | D\sigma_i(y) \tfrac{k}{|k|} |^2 |k|^2
\le   \, V(y) (1+|k|^{2}).
$$
We find, for  $|x| \ge n$ and  $|h| =1$, 
\begin{gather}
\nonumber \sum_{i=1}^d |D(\sigma_n)_i(x)h|^2 +|  Db_n(x)h|
\\
\label{ett}
 =   |Db \big( \Phi_n(x) \big) D\Phi_n(x) h| 
+ \sum_{i=1}^d |D\sigma_i \big (\Phi_n(x)\big) D\Phi_n(x)h|^2
\le c_1  \, V(\Phi_n(x)),
\end{gather}
with $c_1>0$ independent of $n$. Let us define:
\begin {equation} \label{ettt}
V_n(x) = c_1 V(\Phi_n(x)),
\;\; x \in \R^d,\; n \ge 1.
 \end{equation}
 For any $x \in \R^d,$  $n \ge 1$, for any $h \in \R^d$ with $|h|=1$, we have
\begin{equation}\label{vv1} 
\sum_{i=1}^d |D(\sigma_n)_i(x)h|^2 +| Db_n(x)h| \le  V_n(x). 
\end{equation}
Moreover,   by \eqref{244} we get:
\begin{equation} \label{222a}
 V_n(x) = c_1\, V(\Phi_n(x)) =  c_1 f(1+ |\Phi_n(x)|^2) \le  c_1 V(x), \;\; x \in \R^d, \; n \ge 1.
\end{equation}
We introduce the approximating  SDEs:
\begin{equation}\label{app1} 
 \left\{\begin{array}{l}
dX_n(t)= b_n(X_n(t))dt+ \sigma_n (X_n(t))dW(t),\\
X_n(0)=x\in \R^d.
\end{array}\right. 
\end{equation}
Arguing as in Corollary \ref{aa2} and using \eqref{ftt} and \eqref{222a} 
  we obtain
\begin{Proposition} \label{sti1}
The solution $X_n(t,x)$ of \eqref{app1}  verifies, for any $x \in \R^d$,
\begin{equation}
\label{ww1} 
\E[V^4_n(X_n(t,x))]
\le  c_1^4 \,
\E[V^4(X_n(t,x))]
\le C e^{  c t} \, V^4(x),\;\; t \ge 0.
\end{equation}
\end{Proposition}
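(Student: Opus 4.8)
The plan is to reduce the statement to Corollary \ref{aa2} applied to the approximating equation \eqref{app1}. The first inequality in \eqref{ww1} is immediate from \eqref{222a}: since $V_n(x) = c_1 V(\Phi_n(x)) \le c_1 V(x)$ for every $x \in \R^d$ and $n \ge 1$, we get $V_n^4(X_n(t,x)) \le c_1^4 \, V^4(X_n(t,x))$ pointwise on $\Omega$, and taking expectations yields $\E[V_n^4(X_n(t,x))] \le c_1^4 \, \E[V^4(X_n(t,x))]$.

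For the second inequality I would first note that, by \eqref{2ee1} and the $C^1$-regularity of $b$ and $\sigma$, the coefficients $b_n = b \circ \Phi_n$ and $\sigma_n = \sigma \circ \Phi_n$ defined in \eqref{bb1} are globally Lipschitz and bounded $C^1$-functions, so \eqref{app1} has a unique strong solution which does not explode. Moreover, estimate \eqref{ftt} says precisely that the pair $(b_n, a_n)$, with $a_n = \sigma_n \sigma_n^*$, satisfies hypothesis \eqref{e1.21} of Corollary \ref{aa2} with $L$ replaced by the constant $CL$, which is independent of $n$. Hence Corollary \ref{aa2}, applied to \eqref{app1} in place of \eqref{e1.1}, gives
$$
\E[V^4(X_n(t,x))] = \E\big[f^4\big(|X_n(t,x)|^2 + 1\big)\big] \le e^{4 c_0 C L t}\, V^4(x), \qquad t \ge 0,
$$
with the exponential rate independent of $n$. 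Combining this with the first inequality yields \eqref{ww1} with a constant in front proportional to $c_1^4$ and $c = 4 c_0 C L$.

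At this point there is essentially no obstacle remaining: the genuinely substantive step — checking that the truncated coefficients still obey the growth/dissipativity bound required by Corollary \ref{aa2}, uniformly in the truncation level $n$ — was already carried out in Section 3 in the derivation of \eqref{ftt}. The only minor verification is that Corollary \ref{aa2} is legitimately applicable to $(b_n,\sigma_n)$: its proof passes through Proposition \ref{l2.1}, whose It\^o-plus-Gronwall argument relies on the non-explosion condition \eqref{111} to justify the stopping-time localization $\tau_m \uparrow \infty$; for the globally Lipschitz coefficients $b_n$, $\sigma_n$ this condition holds trivially (it is in fact implied by \eqref{ftt}), so the localization is valid and the argument closes exactly as in the unapproximated case.
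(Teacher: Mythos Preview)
Your proof is correct and follows exactly the paper's approach: the paper's own justification is simply ``Arguing as in Corollary \ref{aa2} and using \eqref{ftt} and \eqref{222a}'', which is precisely what you spell out --- \eqref{222a} for the first inequality and Corollary \ref{aa2} applied to $(b_n,\sigma_n)$ via \eqref{ftt} for the second.
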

Finally,  since $\gamma \in [1/2,1],$ we get the following useful estimates, for any $x \in \R^d,$ 
\begin{equation}\label{f44}
|{V}'(x)| = 2 f'(1 + |x|^2) |x|=
2 c_0 \tfrac{f(1 + |x|^{2})}{(1 + |x|^2)^\gamma } |x|
\le 
 2 c_0 \tfrac{{V}(x)}{\sqrt{ 1 + |x|^2} }| x|
 \le 2c_0 V(x)
\end{equation}
and 
\begin{equation}\label{234} 
|V'_n(x)| \le C V^{}(x),\;   \;\; n \ge 1,
\end{equation} 
with $C$ independent of $n$. 

\section {Gradient estimates for SDEs with  $b_n$ and $\sigma_n$ }

  In this section  we prove the following crucial gradient type estimates for the  transition semigroup $(P_t^n)$ associated to the SDE \eqref{app1}.

\begin{Lemma} \label{fin1} There exists $c= c(c_0, M_0, L, \gamma,  d) >0$ such that, for any  $\varphi \in C_b(\R^d)$, $x, h \in \R^d,$  we have
\begin {equation} \label{dop1a}
{\sqrt{t \nu}}  \, |P_t^n \varphi(x + h) - P_t^n \varphi(x)| \le 
c \,|h| \, V^{2}(x)\| \varphi\|_{\infty}, \;\;\;  \text{$t \in (0,1]$,} 
\end{equation}
 where $P_t^n \varphi(x) = \E [\varphi (X_n(t,x))]$.
\end{Lemma}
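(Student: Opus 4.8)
The plan is to exploit the fact that $b_n,\sigma_n$ are globally Lipschitz $C^1$-functions, so we are in the setting where a Bismut–Elworthy–Li type formula is available \emph{for the Feynman–Kac semigroup}. First I would introduce the approximating potential $V_n(x)=c_1V(\Phi_n(x))$ of Section 3, which by \eqref{vv1} dominates $2|Db_n(x)h|+\sum_i|D(\sigma_n)_i(x)h|^2$ (with $|h|=1$), and consider the corresponding Feynman–Kac semigroup
$$
S_t^n\varphi(x)=\E\Big[\varphi(X_n(t,x))\,e^{-\int_0^tV_n(X_n(s,x))\,ds}\Big].
$$
Since $b_n,\sigma_n,V_n$ are Lipschitz, the probabilistic gradient formula of \cite{DaZa97} applies: $\langle DS_t^n\varphi(x),h\rangle$ equals $\frac1t\E[\varphi(X_n(t,x))e^{-\int_0^tV_n\,ds}\int_0^t\langle\sigma_n^{-1}(X_n(s,x))D_xX_n(s,x)h,dW(s)\rangle]$, up to lower-order terms coming from $DV_n$. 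The crucial point is that the exponential weight $e^{-\int_0^tV_n(X_n(s,x))\,ds}$ is designed precisely to absorb the growth of $D_xX_n$: differentiating \eqref{app1}, $\frac{d}{dt}|D_xX_n(t,x)h|^2\le (2|Db_n|+\sum_i|D(\sigma_n)_i|^2)|D_xX_n h|^2+(\text{martingale})$, and since the bracket is $\le V_n(X_n(t,x))$ along $|h|=1$, one gets $\E[e^{-2\int_0^tV_n(X_n(s,x))\,ds}|D_xX_n(t,x)h|^2]\le |h|^2$ (the key cancellation). Combined with Hölder and the Burkholder–Davis–Gundy inequality applied to the stochastic integral, using \eqref{inv} to control $\sigma_n^{-1}$ by $\nu^{-1/2}$, this yields
$$
\sqrt{t\nu}\,|\langle DS_t^n\varphi(x),h\rangle|\le c\,|h|\,\|\varphi\|_\infty\,\big(1+\sup_{s\le t}(\E[V_n^2(X_n(s,x))])^{1/2}\big),
$$
and by \eqref{ww1} the last factor is $\le cV^2(x)$ for $t\le1$; the $DV_n$ term is handled the same way using \eqref{234}.

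Next I would pass from $S_t^n$ back to $P_t^n$ via the variation-of-constants identity \eqref{a2}, which for the approximating semigroups reads $P_t^n\varphi=S_t^n\varphi+\int_0^tS_{t-s}^n(V_nP_s^n\varphi)\,ds$. Applying the gradient estimate just obtained to each term: $\sqrt{(t-s)\nu}\,|DS_{t-s}^n(V_nP_s^n\varphi)(x)|\le c\,V^2(x)\|V_nP_s^n\varphi\|_\infty$, and since $\|P_s^n\varphi\|_\infty\le\|\varphi\|_\infty$ and $V_n$ is \emph{unbounded}, one must be careful: here I would instead keep the weight inside, writing $S_{t-s}^n(V_nP_s^n\varphi)(x)=\E[V_n(X_n(t-s,x))P_s^n\varphi(X_n(t-s,x))e^{-\int V_n}]$ and using that $ue^{-\tau u}\le C\tau^{-1}$ to trade the factor $V_n$ against part of the exponential weight, together with $\E[V_n^2(X_n)]\le cV^2(x)$. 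This produces $|DS_{t-s}^n(V_nP_s^n\varphi)(x)|\le c\,(t-s)^{-1/2}V^2(x)\|\varphi\|_\infty\cdot(t-s)^{-1/2}$; the resulting $\int_0^t(t-s)^{-1}\,ds$ diverges, so one must instead split the exponential more cleverly (keeping $e^{-\frac12\int V_n}$ for the BEL estimate on $S^n$ with time $(t-s)$ and using the other half to bound $(t-s)V_n e^{-\frac12(t-s)\cdot(\text{avg }V_n)}$), yielding an integrable singularity $(t-s)^{-1/2}$ and hence $\int_0^t(t-s)^{-1/2}\,ds\cdot t^{-1/2}=O(1)$ after dividing by $\sqrt t$. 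Collecting the $S_t^n$ term and the integral term gives \eqref{dop1a} with a constant independent of $n$.

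The main obstacle is the bookkeeping in the variation-of-constants step: because $V_n$ is unbounded, one cannot naively bound $\|V_nP_s^n\varphi\|_\infty$, and the gradient estimate for $S^n$ alone carries a singularity $(t-s)^{-1/2}$ that is not obviously integrable against another such factor. The resolution is the standard Feynman–Kac trick of splitting $e^{-\int_0^\tau V_n}=e^{-\frac12\int_0^\tau V_n}\cdot e^{-\frac12\int_0^\tau V_n}$ and using the elementary bound $u\,e^{-cu}\le (ce)^{-1}$ pointwise (here with $u$ a time-averaged value of $V_n$ along the path, justified by Jensen), which is exactly where the interplay between the \emph{size} of $V_n$ and the \emph{exponential decay} it generates pays off — and it is also why the estimate \eqref{ww1} for $\E[V_n^4(X_n)]$, rather than merely $\E[V_n^2(X_n)]$, was established: the fourth moment is needed when one applies Cauchy–Schwarz to separate the $V_n$ factor, the weight, and the BDG term. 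Once this is arranged with constants uniform in $n$ (which holds because \eqref{ftt}, \eqref{vv1}, \eqref{ww1}, \eqref{234} are all uniform in $n$), the proof is complete.
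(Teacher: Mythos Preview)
Your overall architecture matches the paper exactly: introduce the Feynman--Kac semigroup $S_t^n$ with potential $V_n$, use the Bismut--Elworthy--Li formula of \cite{DaZa97}, exploit the cancellation
\[
\E\big[e^{-\int_0^t V_n(X_n(s,x))\,ds}\,|D_xX_n(t,x)h|^2\big]\le |h|^2,
\]
and then recover $P_t^n$ from $S_t^n$ via $P_t^n\varphi=S_t^n\varphi+\int_0^t S_{t-s}^n(V_n P_s^n\varphi)\,ds$.

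The part that does not work is your treatment of $DS_{t-s}^n(V_nP_s^n\varphi)$. You try to apply the already--proven estimate for $DS^n$ as a black box, hit the unboundedness of $V_n$, and then propose the bound $u\,e^{-cu}\le(ce)^{-1}$ ``with $u$ a time-averaged value of $V_n$, justified by Jensen''. This fails: the $V_n$ factor you must control is $V_n(X_n(r,x))$, the \emph{terminal} value, whereas the exponential involves $\int_0^r V_n(X_n(p,x))\,dp$. There is no pathwise inequality relating $V_n(X_n(r))$ to the time average, so the trick cannot absorb the unbounded factor, and the alleged reduction to an integrable $(t-s)^{-1/2}$ singularity is not justified.

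The paper's resolution, which you in fact gesture at in your last sentence, is simpler: do \emph{not} treat $DS^n$ as a black box, but reopen the BEL formula for $S_{r}^n(V_n\psi_s^n)$ with $r=t-s$ and redo the two estimates. In the stochastic-integral term one applies Cauchy--Schwarz so that the exponential weight stays attached to the stochastic integral, i.e.\ one bounds
\[
\tfrac1{r}\big(\E[V_n^2(X_n(r,x))]\big)^{1/2}\big(\E|z_n(r)|^2\big)^{1/2},\qquad z_n(r)=e^{-\int_0^r V_n}\int_0^r\langle\sigma_n^{-1}\eta_n,\,dW\rangle,
\]
and the same It\^o computation already used for $S^n$ gives $\E|z_n(r)|^2\le C r|h|^2/\nu$, yielding a clean $(t-s)^{-1/2}$ singularity with a prefactor $V(x)$. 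In the $DV_n$ term one uses the three-way H\"older inequality with exponents $4,4,2$ to separate $V_n(X_n(r))$, $V_n(X_n(p))$ and $e^{-\beta_n(p)}|\eta_n(p)|$; this is precisely where \eqref{ww1} for $\E[V_n^4]$ is needed, and it produces a term bounded by $C|h|V^2(x)\|\varphi\|_\infty$ with \emph{no} singularity in $r$. Integrating $\int_0^t(t-s)^{-1/2}\,ds$ then gives \eqref{dop1a}. No exponential splitting and no $u e^{-cu}$ bound are required.
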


\subsection{Estimates for the derivative process $D_x X_{n}(t,x)$ }

We fix $n \ge 1$. Here  we  give an estimate for  the derivative $D_x X_{n}(t,x)h,$ which we denote by $\eta^h_{n}(t,x),$ $h\in {\R^d}$ ($X_{n}(t,x)$ is the solution to \eqref{app1}). We also write
\begin{equation}
\label{ci1}
b'_{n}(x) = Db_{n}(x),\;\;\; (\sigma_{n})_i'(x) = D (\sigma_{n})_i(x),\;\; i =1, \ldots, d,\;\; x \in \R^d.
\end{equation} 
 It is   well  known that  $\eta^h_n(t,x)$ is a solution to the random equation
\begin{equation}
\label{e1.8}
\left\{\begin{array}{l}
\ds \tfrac{d}{dt}\,\eta^h_{n}(t,x)=b'_{n}(X_{n}(t,x))\eta^h_n(t,x) dt + \sum_{i=1}^d (\sigma_{n})_i'(X_{n}(t,x)) \eta^h_{n}(t,x) dW^i(t),
\\
\eta^h_{n}(0,x)=h
\end{array}\right. 
\end{equation}

 \begin{Lemma}
\label{l1.2} Using $V_n(x)$ defined in \eqref{ettt}
 the  following estimate holds:
\begin{equation}
\label{e1.9}
\E \Big [ e^{- \int_0^t V_{n}(X_{n}(s,x)) ds} \, |\eta^h_{n}(t,x)|^2 \Big] \le  \,|h|^2,\quad t\ge 0,\;x,h\in {\R^d}.
\end{equation}
\end{Lemma}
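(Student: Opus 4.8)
The plan is to apply It\^o's formula to the process $t \mapsto e^{-\int_0^t V_n(X_n(s,x))\,ds}\,|\eta^h_n(t,x)|^2$ and to show that its drift is nonpositive, so that after taking expectations the quantity is bounded by its value at $t=0$, which is $|h|^2$. First I would compute $d|\eta^h_n(t,x)|^2$ from the linear SDE \eqref{e1.8}: the drift contribution is
$$
2\langle b_n'(X_n)\eta^h_n,\eta^h_n\rangle\,dt + \sum_{i=1}^d |(\sigma_n)_i'(X_n)\eta^h_n|^2\,dt,
$$
while the martingale part is $2\sum_i \langle (\sigma_n)_i'(X_n)\eta^h_n,\eta^h_n\rangle\,dW^i(t)$. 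Then I would multiply by the (bounded variation, differentiable) factor $e^{-\int_0^t V_n(X_n(s,x))\,ds}$, whose time derivative brings down the extra term $-V_n(X_n(t,x))\,e^{-\int_0^t V_n\,ds}\,|\eta^h_n(t,x)|^2\,dt$. Collecting the $dt$-terms, the drift of the full process is
$$
e^{-\int_0^t V_n\,ds}\Big(2\langle b_n'(X_n)\eta^h_n,\eta^h_n\rangle + \sum_{i=1}^d |(\sigma_n)_i'(X_n)\eta^h_n|^2 - V_n(X_n)\,|\eta^h_n|^2\Big)\,dt.
$$

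The key pointwise inequality is then that the bracket is $\le 0$. Normalizing, for $k\in\R^d$, $k\neq 0$, we have $2\langle b_n'(y)k,k\rangle \le 2|b_n'(y)k|\,|k|$ by Cauchy--Schwarz, hence
$$
2\langle b_n'(y)k,k\rangle + \sum_{i=1}^d |(\sigma_n)_i'(y)k|^2 \le \Big(2|b_n'(y)\tfrac{k}{|k|}| + \sum_{i=1}^d |(\sigma_n)_i'(y)\tfrac{k}{|k|}|^2\Big)|k|^2 \le V_n(y)\,|k|^2,
$$
where the last step is exactly \eqref{vv1}. Applying this with $y=X_n(t,x)$ and $k=\eta^h_n(t,x)$ shows the drift bracket is $\le 0$ pointwise, as desired.

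The remaining, and genuinely delicate, point is the passage from the SDE to the expectation bound, i.e.\ controlling the local martingale term. Since $b_n$ and $\sigma_n$ are globally Lipschitz and bounded, standard estimates give $\E[\sup_{s\le t}|\eta^h_n(s,x)|^{2}] < \infty$ (in fact all moments are finite), and $(\sigma_n)_i'$ is bounded, so the stochastic integral $\int_0^\cdot 2\,e^{-\int_0^s V_n\,dr}\langle (\sigma_n)_i'(X_n)\eta^h_n,\eta^h_n\rangle\,dW^i$ is a genuine martingale (its quadratic variation has finite expectation because the integrand is dominated by a constant times $|\eta^h_n|^4$, which is integrable uniformly on $[0,t]$, and the exponential factor is bounded by $1$). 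Hence taking expectations kills the martingale, and we are left with
$$
\E\Big[e^{-\int_0^t V_n(X_n(s,x))\,ds}|\eta^h_n(t,x)|^2\Big] = |h|^2 + \E\!\int_0^t e^{-\int_0^s V_n\,dr}\big(\text{bracket}\big)\,ds \le |h|^2,
$$
which is \eqref{e1.9}. If one wants to be fully careful about integrability of the martingale part, the standard device is to localize with stopping times $\tau_m = \inf\{t: |\eta^h_n(t,x)|\ge m\}$, obtain the inequality up to $t\wedge\tau_m$ (where the stochastic integral is a true martingale), and then let $m\to\infty$ using Fatou's lemma on the left together with $\tau_m\uparrow\infty$ a.s.; the exponential weight being $\le 1$ makes every term on the right manifestly controlled. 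I expect this localization/integrability bookkeeping to be the only real obstacle, the algebraic heart of the proof being the one-line inequality \eqref{vv1} built into the definition of $V_n$.
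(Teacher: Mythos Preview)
Your proof is correct and follows essentially the same route as the paper: define $Z_n(t)=e^{-\int_0^t V_n(X_n(s))\,ds}|\eta^h_n(t)|^2$, apply It\^o's formula, use \eqref{vv1} to see that the drift is nonpositive, and take expectations. In fact your discussion of why the stochastic integral is a true martingale (boundedness of $(\sigma_n)_i'$ and finite moments of $\eta^h_n$ under Lipschitz coefficients, or alternatively localization plus Fatou) is more explicit than what the paper writes.
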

\begin{proof}   In the proof we write
$
X_{n}(t,x) = X_{n}(t),$ $\eta^h_{n}(t,x) = \eta_{n}(t)
$
and introduce the process 
$$
Z_{n}(t) = e^{- \int_0^t V_{n}(X_{n}(s)) ds} \, |\eta_{n} (t)|^2.
$$
 Since 
\begin{gather*}
d |\eta_{n}(t)|^2 = {2\langle \eta_{n}(t), d \eta_{n}(t)\rangle} = 2 \langle b'_{n}(X_{n}(t)) \eta_{n}(t), \eta_{n}(t) \rangle dt
\\ + 2 \sum_{i=1}^d   \langle \eta_{n}(t), (\sigma_{n})_i'(X_{n}(t)) \eta_{n}(t) \rangle dW^i_t
+ \sum_{i=1}^d   |(\sigma_{n})_i'(X_{n}(t)) \eta_{n}(t)|^2 dt
\end{gather*}
we get
$$
\begin{aligned}
dZ_{n} (t) =  e^{- \int_0^t V_{n}(X(s)) ds} \Big [  \big (2 \langle b'_{n}(X_{n}(t)) 
  \eta_{n}(t), \eta_{n}(t) \rangle 
\\ + \sum_{i=1}^d   |\sigma_i'(X_{n}(t)) \eta_{n}(t)|^2  - V_{n}(X_{n}(t) |\eta_{n}(t)|^2 \big)dt
 + 2 \sum_{i=1}^d   \langle \eta_{n}(t), (\sigma_{n})_i'(X_{n}(t)) \eta_{n}(t) \rangle dW^i_t
\Big].
\end{aligned}
$$
Using    \eqref{vv1}  we get 
\begin{gather*}
dZ_{n} (t) \le  e^{- \int_0^t V_{n}(X(s)) ds} [ V_{n}(X_{n}(t)) - V_{n}(X_{n}(t))] \,  |\eta_{n}(t)|^2  dt
\\ + 2 \sum_{i=1}^d   \langle \eta_{n}(t), (\sigma_{n})_i'(X_{n}(t)) \eta_{n}(t) \rangle dW^i(t)
\Big]
\end{gather*}
and so, $\P$-a.s.,
$$
Z_{n}(t) \le  |h|^2 + 2 \sum_{i=1}^d  \int_0^t \langle \eta_{n}(s),(\sigma_{n})_i'(X_{n}(s)) \eta_{n}(s) \rangle dW^i(s).
$$
We find
$$
\E [Z_{n}(t)] \le |h|^2,\;\;\; t \ge 0,
$$
and the assertion holds.
\end{proof}

\subsection{Gradient  estimates for   the Feynman--Kac semigroup $S_t^n$}

As we said in the introduction, 
 we cannot  estimate  $D_x P_t^n\varphi$ for $\varphi\in C_b({\R^d})$  (uniformly in $n$) directly  using the   Bismut--Elworthy--Li  formula (see \cite{El92});  it would be necessary to estimate $|\eta_x^h(t,x)|$ whereas 
 we are only able  
 to show \eqref{e1.9}. 
  For this reason,  we consider the    potential
  $
  {V_{n}}(x),\; \, x\in {\R^d},
  $ given in \eqref{ettt}
  and the Feynman--Kac semigroup $S_t^{n}$ given by
$$
S_t^{n}\varphi(x)=\E[\varphi(X_{n}(t,x))\,e^{-\int_0^t{V_{n}}(X_{n}(s,x))\,ds}],\;\; \varphi \in B_b(\R^d), \; t \ge 0,\; x \in \R^d.
$$
 We recall that the Bismut--Elworthy--Li formula generalises to $S_t^{n}$, thanks to the regularity of $b_n$, $\sigma_n$ and $V_n$ (see \cite{DaZa97}). 
  In fact for all $\varphi\in C_b({\R^d})$, setting
 \begin{equation}
 \label{be1}
 \beta_{n}(t) =\beta_{n}(t,x) =\int_0^t {V_{n}}(X_{n}(s,x))ds, 
\;\; t \ge 0,
\end{equation}
 we have that $S_t^{n}\varphi$ is differentiable on $\R^d$, $t>0$, and 
 the following identity holds 
(we write $D{V_{n}}(x)={V'_{n}}(x)$, $x \in \R^d$)
\begin{equation}
\label{e1.11}
\begin{array}{l} \ds 
  \langle DS_t^{n}\varphi(x),h   \rangle =\tfrac1t\,\E \left[ \varphi(X_{n}(t,x))\,e^{-{\beta_{n}}(t)}\,\int_0^t  \langle \sigma^{-1}_{n}(X_{n}(s,x)) \, \eta^h_{n}(s,x),dW(s)   \rangle \right]\\
\\
\ds-\E \left[ \varphi(X_{n}(t,x))\,e^{-{\beta_{n}}(t)}\int_0^t\left(1-\tfrac{s}{t}   \right)\,\langle    {V_{n}}'(X_{n}(s,x)),\eta^h_{n}(s,x)  \rangle \,ds \right]\\
\\
=: I_1(\varphi,x,h,t,n)+I_2(\varphi,x,h,t,n)=I_1+I_2,\;\; t>0, \, \, x,h \in \R^d.
\end{array} 
\end{equation}
  \begin{Lemma}
\label{l1.3}
Let  $\varphi\in  C_b({\R^d})$, $t \in (0,1]$,  $x\in {\R^d}$. Then 
we have with 
 $C = C(c_0, \gamma,M_0, L,   d) >0$
\begin{equation}
\label{e13}
 |DS_t^{n}\varphi(x)| \le  \| \varphi\|_{\infty} 
\tfrac{C}{\sqrt {\nu \,t}}   
{V}(x)^{}. 
 \end{equation}
\end{Lemma}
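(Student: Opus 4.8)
The plan is to start from the Bismut--Elworthy--Li representation \eqref{e1.11}, which is available here precisely because $b_n$, $\sigma_n$ and $V_n$ are bounded Lipschitz $C^1$-functions, and to bound the two terms $I_1$ and $I_2$ separately, then combine them using only $t\le 1$ and the elementary fact that $V(x)=f(1+|x|^2)\ge f(1)=M_0>0$. Throughout, recall $\beta_n(t)=\int_0^t V_n(X_n(s,x))\,ds$ and note $V_n\ge 0$, so $\beta_n$ is nondecreasing and $e^{-\beta_n(t)}\le e^{-\beta_n(s)}\le 1$ for $s\le t$.

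For $I_1$, I would set $M_t=\int_0^t\langle\sigma_n^{-1}(X_n(s,x))\eta^h_n(s,x),dW(s)\rangle$, pull $\|\varphi\|_\infty$ out and apply Cauchy--Schwarz in the form $\E[e^{-\beta_n(t)}|M_t|]\le(\E e^{-\beta_n(t)})^{1/2}(\E[e^{-\beta_n(t)}M_t^2])^{1/2}$, the first factor being $\le 1$. For the second factor I would apply It\^o's formula to $e^{-\beta_n(t)}M_t^2$: the bounded-variation term $-\int_0^t V_n(X_n(s))e^{-\beta_n(s)}M_s^2\,ds$ is nonpositive and can be dropped, the stochastic integral has zero expectation (all processes have moments for fixed $n$, since $b_n,\sigma_n,V_n$ are bounded), so $\E[e^{-\beta_n(t)}M_t^2]\le\E\int_0^t e^{-\beta_n(s)}|\sigma_n^{-1}(X_n(s))\eta^h_n(s)|^2\,ds$. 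Using $|\sigma_n^{-1}(x)v|^2\le|v|^2/\nu$ from \eqref{inv} (valid for each $\sigma_n$ with the same $\nu$) and then Lemma \ref{l1.2} term by term gives $\E[e^{-\beta_n(t)}M_t^2]\le\nu^{-1}\int_0^t\E[e^{-\beta_n(s)}|\eta^h_n(s)|^2]\,ds\le t|h|^2/\nu$, hence $|I_1|\le\|\varphi\|_\infty|h|/\sqrt{\nu t}$.

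For $I_2$, I would extract $\|\varphi\|_\infty$, bound $1-s/t\le 1$ and use $e^{-\beta_n(t)}\le e^{-\beta_n(s)}$ to reduce to $\int_0^t\E[e^{-\beta_n(s)}|V_n'(X_n(s))||\eta^h_n(s)|]\,ds$. At each $s$ I would apply Cauchy--Schwarz keeping the \emph{full} weight $e^{-\beta_n(s)}$ paired with $|\eta^h_n(s)|^2$, so that Lemma \ref{l1.2} contributes a factor $|h|$; for the remaining factor $(\E[e^{-\beta_n(s)}|V_n'(X_n(s))|^2])^{1/2}$ I would drop $e^{-\beta_n(s)}\le 1$, use $|V_n'(x)|\le C V(x)$ from \eqref{234}, and then the fourth-moment bound \eqref{ww1} together with Jensen's inequality to get $\E[V(X_n(s))^2]\le C V(x)^2$ for $s\le 1$. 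This gives $|I_2|\le C t\,|h|\,V(x)\,\|\varphi\|_\infty\le C|h|\,V(x)\,\|\varphi\|_\infty$ on $(0,1]$. Adding the two bounds and using $V(x)\ge M_0$ and $t\le 1$ to absorb both into a single expression of the form $C\,V(x)/\sqrt{\nu t}$ yields \eqref{e13} after taking the supremum over unit vectors $h$.

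I expect the estimate of $I_2$ to be the main obstacle. The key point is that the Feynman--Kac damping $e^{-\beta_n}$ must be spent entirely on taming $|\eta^h_n|^2$ through \eqref{e1.9} and is \emph{not} available to control the possibly large values $V(X_n(s))$ entering via $V_n'$; those growth factors must instead be absorbed by the a priori moment bound \eqref{ww1}, which is exactly the place where Hypothesis \ref{h1}(H1) --- the balance between the growth of the coefficients and the choice of potential $V$ --- is used, while (H2) enters through Lemma \ref{l1.2} by the very construction of $V_n$ in Section 3.
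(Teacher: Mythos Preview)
Your proposal is correct and follows essentially the same route as the paper: start from the Bismut--Elworthy--Li representation \eqref{e1.11}, control $I_1$ by applying It\^o's formula to the product of $e^{-\beta_n}$ with the square of the stochastic integral (the paper works with $z_n(t)^2=e^{-2\beta_n(t)}M_t^2$ rather than your $e^{-\beta_n(t)}M_t^2$, but the computation is the same), drop the nonpositive drift term and invoke Lemma \ref{l1.2} together with \eqref{inv}; control $I_2$ via Cauchy--Schwarz, \eqref{234}, Proposition \ref{sti1} and again Lemma \ref{l1.2}. The only cosmetic difference is that the paper applies Cauchy--Schwarz jointly in $(s,\omega)$ and obtains a factor $t^{1/2}$ in the $I_2$ bound, while your pointwise-in-$s$ application yields a factor $t$; since $t\le 1$ this is immaterial.
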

\begin{proof}   In the proof we write
$
X_{n}(t,x) = X_{n}(t),$ $\eta^h_{n}(t,x) = \eta_{n}(t)
$ and consider the process ${\beta_{n}}(t)$ given in \eqref{be1}.
 We first consider  $ I_1.$ We have
\begin{equation}
\label{e14e}
\begin{array}{l}
|I_1|^2 \le  \ds \tfrac1{t^2}\left[\E\left( \varphi^2(X_{n}(t)\right)\right] \,  \E\left(e^{-2 {\beta_{n}}(t)}\left|\int_0^t \langle   \sigma^{-1}_{n}(X_{n}(s)) \eta_{n}(s),dW(s)\rangle\right|^2    \right)  \\
\\
\hspace{6mm}\le \ds \tfrac1{t^2}\left[\E\left( \varphi^2(X_{n}(t)\right)\right] \E\left(|z_{n}(t)|^2  \right),
\end{array} 
\end{equation}
where
\begin{equation}
\label{e15e}
z_{n}(t)=e^{- {\beta_{n}}(t)}\int_0^t \langle  \sigma^{-1}_{n}(X_{n}(s)) \eta_{n}(s),dW(s)\rangle,\quad t\ge 0.
\end{equation}
We now apply It\^o's formula: 
$$
\begin{array}{lll}
dz_{n}(t)&=&\ds-{\beta_{n}}'(t)e^{- {\beta_{n}}(t)}\int_0^t \langle  
\sigma^{-1}_{n}(X_{n}(s))
\eta_{n}(s),dW(s)\rangle\,dt \\ \\ &+& e^{- {\beta_{n}}(t)}  \langle \sigma^{-1}_{n}(X_{n}(t))  \eta_{n}(t),dW(t)\rangle\\ \\
&=&\ds- {\beta_{n}}'(t)z_{n}(t)dt+ e^{- {\beta_{n}}(t)}  \langle \sigma^{-1}_{n}(X_{n}(t)) \eta_{n}(t),dW(t)\rangle;
\end{array}
$$ 
we find
$$
\begin{array}{lll}
\ds d (z_{n}(t)^2) &=& 2 z_{n}(t) \big(-{\beta_{n}}'(t)z_{n}(t)dt+e^{- {\beta_{n}}(t)}  \langle   \sigma^{-1}_{n}(X_{n}(t)) \eta_{n}(t),dW(t)\rangle \big)\\
\\
&&\ds \, + \, e^{-2{\beta_{n}}(t)}| \, \sigma^{-1}_{n}(X_{n}(t)) \eta_{n}(t)|^2 \, dt.
\end{array} 
$$
Integrating, we find 
\begin{equation}
\label{e16f}
\begin{array}{lll}
 |z_{n}(t)|^2 &=&\ds-2\int_0^t|z(s)|^{2} {\beta_{n}}'(s)\,ds
+2\int_0^t z(s)  e^{- {\beta_{n}}(s)}  \langle  \sigma^{-1}_{n}(X_{n}(s)) \eta (s),dW(s)\rangle\\
\\
&&\ds+ \int_0^te^{-2{\beta_{n}}(s)} | \sigma^{-1}_{n}(X_{n}(s)) \eta_{n}(s)|^2ds.
\end{array} 
\end{equation}
Neglecting the negative  term in \eqref{e16f}  and taking expectation, we find by Lemma 
\ref{l1.2} (recall that  (H3) implies  
 $| \sigma^{-1}_{n}(x) h|^2 \le \tfrac{|h|^2}{\nu},$ $x ,$ $ h \in \R^d).$  
\begin{gather*}
\E [ |z_{n}(t)|^2 ] \le \E \Big[  \int_0^te^{-2{\beta_{n}}(s)} | \sigma^{-1}_{n}(X_{n}(s)) \eta_{n}(s)|^2ds\Big]
\\
=  \int_0^t \E \big [e^{-2 \int_0^s {V_{n}}(X_{n}(r)) dr} | \sigma^{-1}_{n}(X_{n}(s)) \eta_{n}(s)|^2\big] ds \le  \tfrac{c |h|^2 \, t}{\nu}, \;\;\; t \ge 0.
\end{gather*}
Coming back to \eqref{e14e} we obtain 
 \begin{equation}
\label{ee}
|I_1 (\varphi,x,h,t,n)|^2 \le    \ds \tfrac{c |h|^2}{\nu \, t}\left[\E\left( \varphi^2(X_{n}(t,x)\right)\right].
\end{equation}
Now we treat  $I_2 = I_2 (\varphi,x,h,t,n)$. 
$$
I_2 = -\E \left[ \varphi(X_{n}(t))\,e^{-{\beta_{n}}(t)}\int_0^t\left(1-\tfrac{s}{t}   \right)\,\langle    {V'_{n}}(X_{n}(s)),\eta_{n}(s)  \rangle \,ds \right ].
$$
Using \eqref{234}, we know that $|{V'_{n}}(X_{n}(s))| \le C {V}(X_{n}(s))$; 
we deduce
 \begin{equation}
\label{e29}
|I_2|
\le c \, \| \varphi\|_{\infty} \,   \Lambda(t),
\end{equation}
where 
\begin{equation}
\label{e30a}
\begin{array}{l}
\ds\Lambda(t)=\E\left[e^{-{\beta_{n}}(t)} \int_0^t {V}^{}(X_{n}(s))  \,|\eta_{n}(s)|\,ds \right] \\
\\
\ds\le  \int_0^t \E \left[  {V}^{}(X_{n}(s))  \, e^{-{\beta_{n}}(s)} \, |\eta_{n}(s)|\right ]\,ds   \\
\\
\ds\le  \left (\int_0^t \E \left[{V}^{2}(X_{n}(r))\right] dr\right)^{1/2} 
 \left ( \int_0^t \E \left [ e^{-2\beta_{n}(s)}\,|\eta_{n}(s)|^2 \right] ds \right)^{1/2}.
 \end{array}
\end{equation} 
Now we use  \eqref{ww1} and  \eqref{e1.9}  to get 
\begin{equation}
\label{e31}
|I_2|
\le  C \, t^{1/2} \, |h| \| \varphi\|_{\infty} 
{V}^{}(x), \;\; t \in (0,1]. 
 \end{equation}
 Finally, by \eqref{ee}  and \eqref{e31}, we get the assertion.
\end{proof}

\subsection{Proof of  Lemma \ref{fin1}}

We first remark that,
for any $\varphi \in C_b (\R^d)$, $v(t,x) = S_t^{n} \varphi (x)$ solves
$$
\begin{cases}
\partial_t v(t,x) = {\cal L}_{n} v(t,x) - {V_{n}}(x) v(t,x),\;\;\;  t>0,
\\
v(0,x) = \varphi (x),\;\; x \in \R^d,
\end{cases}
$$ 
where ${\cal L}_{n} \phi (x) =  \tfrac{1}{2} Tr (a_{n}(x) D^2 \phi (x)) + b_{n}(x) \cdot D\phi(x)$. Note that 
$$
\begin{cases}
\partial_t u(t,x) = {\cal L}_n u(t,x) - {V_{n}}(x) u(t,x) + {V_{n}}(x)u(t,x),\;\;\;  t>0,
\\
u(0,x) = \varphi (x),\;\; x \in \R^d,
\end{cases}
$$  
has the unique bounded classical solution $u(t,x) = P_t^{n} \varphi (x)$.  It follows that
\begin{equation}
\label{a21}
P_t^n\varphi(x)=S_t^{n}\varphi(x)+\int_0^t S_{t-s}^n({V_{n}} \psi_s^n )(x)\,ds,
\end{equation}
where $\psi_s^n(x) = P_s^n\varphi(x)$ is a regular bounded function.
 We deduce a  formula for the directional derivative of $P_t^n \varphi$ along the direction $h$:
\begin{equation}
\label{a211}
 \langle D P_t^n\varphi(x), h \rangle
= D_h P_t^n\varphi(x)= D_h S_t^n\varphi(x)+\int_0^t D_h S_{t-s}^n({V_{n}} \psi_s^n )(x)\,ds,
\end{equation}
 $ t>0,\, x,h \in \R^d.$
We want to obtain an estimate for   $D_h P_t^n\varphi $ as
\begin{equation}
\label{st}
 |D_hP_t^n\varphi(x)| \le C |h|  \| \varphi\|_{\infty} 
\tfrac{1}{\sqrt {\nu \,t}} V^{2}(x), \;  x,h \in \R^d,\, t\in (0,1]. 
 \end{equation}
From \eqref{st} assertion \eqref{dop1a} follows easily.
 We  have already proved an  estimate for $D_h S_t^n\varphi$ in Lemma \ref{l1.3} (note that there exists $C>0$ such that $V(x) \le CV^2(x)$, $x \in \R^d$). 

As before we write
$
X_{n}(t,x) = X_{n}(t),$ $\eta^h_{n}(t,x) = \eta_{n}(t)
$, $V' = DV$ and consider the process ${\beta_{n}}(t)$ given in \eqref{be1}. 

We fix $t \in (0,1]$, $s \in [0,t)$;  setting 
$
r = t-s
$
we find
\begin{gather*}
\langle D S_{r}^n({V_{n}} \psi_s ) (x), h \rangle
\\
 =\tfrac{1}{r}\,\E \left[  {V_{n}}(X_n(r)) \psi_s^n (X_n(r))\,e^{-\beta_n(r)}\,\int_0^{r}  \langle \sigma^{-1}_{n}(X_n(p)) \, \eta_n(p),dW(p)   \rangle \right]
\\
\ds-\E \left[  {V_{n}}(X(r,x)) \psi_s^n  (X_n(r))\,e^{-\beta_n (r)}\int_0^{r}\left(1-\tfrac{p}{r}   \right)\,\langle    {V_{n}}'(X_n(p)),\eta_n(p)  \rangle \,dp \right]
\\
=: I_1( \psi_s^n ,x,h,r,n)+I_2( \psi_s^n ,x,h,r,n)=I_1+I_2,
\end{gather*}
 for $  x,h \in \R^d$
(cf. \eqref{e1.11}). Since $ \| \psi_s^n \|_{\infty} \le \| \varphi\|_{\infty}$, $s \ge 0,$ $n \ge 1,$ we get arguing as in the proof of Lemma  \ref{l1.3} (we are using  similar notations)
\begin{equation}
\label{e14e1}
\begin{array}{l}
\ds |I_1|^2 \le  \| \varphi\|_{\infty}^2 \ds\tfrac1{r^2}\left[\E\left( V_{n}^2(X_n(r)\right)\right] \,  \E\left(e^{-2 \beta_n(r)}\left|\int_0^r \langle   \sigma^{-1}_{n}(X_n(p)) \eta_{n}(p),dW(p)\rangle\right|^2    \right)  \\
\\
\hspace{6mm}
\le \ds \tfrac2r \, \tfrac{ C |h|^2}{\nu} \, \| \varphi\|_{\infty}^2 \, \E\left[ V_{n}^2(X_n(r)) \right].
\end{array} 
\end{equation}
Now we use \eqref{ww1} and obtain
  \begin{equation}
\label{ee1}
|I_1|^2 
\le    \ds \tfrac2r \, \tfrac{  c \, |h|^2}{\nu} \, \| \varphi\|_{\infty}^2  \, V^2\left( x  \right).
\end{equation}
Now we treat  $I_2 $, 
$$
I_2 = -\E \left[ {V_{n}}(X_n(r)) \psi_s^n(X_n(r))\,e^{-\beta_n(r)}\int_0^r\left(1-\tfrac{p}{r}   \right)\,\langle    V_{n}'(X_n(p)),\eta_{n}(p)  \rangle \,dp \right ].
$$
We argue as in the estimates before \eqref{e29}:
\begin{equation}
\label{e291}
|I_2|
\le C \, \| \varphi\|_{\infty} \,   \Gamma_n(r),
\end{equation}
\begin{equation}
\label{e30}
\begin{array}{l}
\ds\Gamma_n(r)= \Gamma_n(r,x)=\E\left[ {V_{n}}(X_n(r)) \,  \int_0^r {V_{n}}^{}(X_n(p)) e^{-\beta_n(p)} \,|\eta_{n}(p)|\,dp \right].
 \end{array}
\end{equation} 
 Using the generalized H\"older inequality, since $\tfrac14 + \tfrac14 +\tfrac12 =1$,  estimate \eqref{ww1} and Lemma \ref{l1.2} we can compute 
\begin{gather*}
\E\big[  V_{n}(X_n(r)) \,  V_{n}(X_n(p))^{} e^{-\beta_n(p)} \,|\eta_{n}(p)| \big]
\\
\le (\E\big[  V_{n}^4(X_n(r))])^{1/4}  (\E\big[   V_{n}^4(X_n(p)))^{1/4}
( \E\big[  e^{-2\beta_n(p)} \,|\eta_{n}(p)|^2 \big])^{1/2}
\\
\le C |h|   \, V^{2}(x), \;\; p \in [0,r].
\end{gather*}
It follows that
$$
 \Gamma_n(r) \le  C |h|   \, V^{2}(x).
$$
Collecting the previous estimates we obtain
$$
| \langle D S_{t-s}(V_{n} \psi_s^n ) (x), h \rangle |
\le C |h|   \| \varphi\|_{\infty} \Big( \tfrac{1}{\sqrt{\nu (t-s)}} + 1 \Big) V^{2}(x).
$$
We finally get,  for $t \in [0,1],$ $x \in \R^d,$ 
$$
\int_0^t | \langle D S_{t-s}^n({V_n} \psi_s^n ) (x), h \rangle | ds
\le  \tfrac{c}{\sqrt{\nu}} |h|   \| \varphi\|_{\infty} 
V^{2}(x).
$$
for some $c = c(L, d,M_0, \gamma, c_0)>0$.
 Now \eqref{st} follows.   Hence we have proved 
 \begin{equation} \label{322}
 |P_t^n \varphi(x + h) - P_t^n \varphi(x)| \le C |h|  \| \varphi\|_{\infty} 
\tfrac{1}{\sqrt {\nu \,t}} {V}^{2}(x), \;  x,h \in \R^d,\, t\in (0,1].
 \end{equation}
 The proof is complete.

\section{Proof of Theorem \ref{mai} }

{\it I Step. } Let us fix $\varphi \in C_b(\R^d)$. We first prove 
the Lipschitz estimate
\begin{equation} \label{322a}
 |P_t \varphi(x + h) - P_t \varphi(x)| = |\E \big [\varphi (X(t, x +h)) -\varphi (X(t, x )) \big] |
\end{equation}
$$
\le C |h|  \| \varphi\|_{\infty} 
\tfrac{1}{\sqrt {\nu \,t}} {V}^{2}(x), \;  x,h \in \R^d,\, t\in (0,1].
$$
We deduce such estimate by a localization argument, passing to the limit in  \eqref{322}. To this purpose, let $x , h \in \R^d$ and set $X(t) = X(t, x+h),\;\; Y(t) = X(t,x)$,
$$
\;\; X_n(t) = X_n(t, x+h),\;\; Y_n(t) = X_n(t,x).
$$
Note that the $\R^{2d}$-valued processes  $Z(t) = (X(t), Y(t))$ and 
$Z_n(t) = (X_n(t), Y_n(t))$ are the unique strong solutions of  the SDEs
$$ 
\left\{\begin{array}{l}
dZ(t)= \tilde b(Z(t))dt+  \tilde \sigma (Z(t))dW(t),\\
Z(0)=(x+h,x)\in \R^{2d},
\end{array}\right. 
$$
$$ 
\left\{\begin{array}{l}
dZ_n(t)= \tilde b_n(Z_n(t))dt+  \tilde \sigma_n (Z_n(t))dW(t),\\
Z_n(0)=(x+h,x)\in \R^{2d},
\end{array}\right. 
$$
where 
$$
\tilde b (x,y) = \tilde b(z) =  \begin{pmatrix}
 b(x)\\
 b(y)
\end{pmatrix},\;\;\; \tilde \sigma (z) = \tilde \sigma (x,y) =  \begin{pmatrix}
 \sigma(x)\\
 \sigma(y)
\end{pmatrix},
$$
$$
\tilde b_n (x,y) = \tilde b_n(z) =  \begin{pmatrix}
 b_n(x)\\
 b_n(y)
\end{pmatrix},\;\;\; \tilde \sigma_n (z) = \tilde \sigma_n (x,y) =  \begin{pmatrix}
 \sigma_n(x)\\
 \sigma_n(y)
\end{pmatrix}
$$
($b_n$ and $\sigma_n$ are the same as in Section 3).
Let us consider the open set $A_n = B(0,n) \times B(0,n) \subset \R^{2d}$, $n \ge 1,$ and consider the stopping times 
$$
 \tau_n^Z = 
 \tau_n^Z(x+h,x)  = \inf \{t \ge 0 \; :\; Z(t) \not \in A_n  \} = \tau^X_n \wedge \tau^Y_n,
$$
where $\tau^X_n = \tau^X_n (x+h) = \inf \{t \ge 0 \; :\; |X(t)| \ge n \}$; 
 $\tau^Y_n = \tau^Y_n (x) = \inf \{t \ge 0 \; :\; |Y(t)| \ge n \}$. Moreover
$$
\tau_n^{Z_n} = 
 \tau_n^{Z_n}(x+h,x)  = \inf \{t \ge 0 \; :\; Z_n(t) \not \in A_n \}.
$$
Since we know that 
$$
\tilde b_n (z) =  \tilde b (z),\;\; \tilde \sigma_n (z) =  \tilde \sigma (z),\;\; z \in A_n,\, n \ge 1,
$$
and all the coefficients of the SDEs are locally Lipschitz, by a well-known localization principle  we obtain that, for any $n \ge 1$, $\P$-a.s., 
\begin{equation}\label{fhh} 
 \tau_n^Z = \tau_n^{Z_n},\;\;\text{and}\;\; Z(t \wedge \tau_n^Z ) = Z_n(t \wedge \tau_n^{Z_n} ),\; t \ge 0.
\end{equation}
Let us fix $t >0.$
 We   show now that $Z_n(t)$ converges in law to $Z(t)$ (this fact could be also deduced by a general convergence result given in Theorem V.5 of \cite{Kr95};  we provide a direct proof).
 Let $F(x,y)$ be a real continuous and bounded function on $\R^{2d}$. We have
\begin{gather*} 
|\E [F(Z(t)) - F(Z_n(t)) ]| = |\E [F(Z(t)) - F(Z_n(t)) \,  1_{ \{  t \le \tau_n^Z \}}]| 
\\
+ |\E [F(Z(t)) - F(Z_n(t)) \,  1_{ \{  t > \tau_n^Z \}} ]|
\le 2 \|F  \|_{\infty} \, \P ( t > \tau_n^Z).
\end{gather*}
We have that  $\P ( t > \tau_n^Z) \le \P ( t > \tau_n^X) + \P ( t > \tau_n^Y)$ and we know that 
$$
\lim_{n \to \infty} (\P ( t > \tau_n^X) + \P ( t > \tau_n^Y)) =0
$$
thanks to hypothesis (H1) which implies the non-explosion condition \eqref{111}.
Hence $|\E [F(Z(t)) - F(Z_n(t)) ]| $ tends to $0$ as $n \to \infty$ and this shows the desired weak convergence.

Writing   \eqref{322} with  $F(u,v) = \varphi(u) - \varphi(v)$, $u,v \in \R^d,$ we have:
\begin{gather*} 
|\E \big [F (Z_n(t)) \big] | = |\E \big [\varphi (X_n(t, x +h)) -\varphi (X_n(t, x )) \big] |
\\
 \le   C |h|  \| \varphi\|_{\infty} 
\tfrac{1}{\sqrt {\nu \,t}} {V}^{2}(x).
\end{gather*}
By weak convergence, passing 
  to the limit as $n \to \infty$ and get \eqref{322a}.

\smallskip \noindent
{\it II Step. }   By Section 4 in \cite{MPW} we deduce  in particular that 
$P_t \varphi \in C^2(\R^d)$, $t>0$. To see this fact, let us fix $\varphi \in C_b(\R^d)$; recall that 
\begin{equation}\label{exp1} 
P_t \varphi(x) = \lim_{n \to \infty} \E [\varphi(X(t,x)) \, 1_{\{  \tau_{n}^x >t \}}], \;\;\; t>0, \; x \in \R^d,
\end{equation}
where, for $n\ge 1$,  $\tau_n^x =  \inf \{t \ge 0 \; :\; |X(t,x)| \ge n\}.$ The previous formula holds because  by (H1)  we know that, $\P$-a.s., $\tau_n^x \to \infty$, as $n \to \infty$.
 
As in \cite{MPW} we can apply the classical interior Schauder estimates by A. Friedman to each $u_n(t,x) = \E [\varphi(X(t,x)) \, 1_{\{  \tau_{n}^x >t \}}]$; by  Theorem 4.2 in \cite{MPW} 
 we obtain that $P_t \varphi \in C^{1 + \alpha/2, 2 + \alpha}_{loc} ((0, \infty) \times \R^d)$, for any $\alpha \in (0,1)$.
 
Using that  $P_t \varphi \in C^2(\R^d)$, $t>0$, and our estimate \eqref{322a}
we easily obtain the gradient estimate \eqref{dop1}.
   \qed

\begin{Remark}{\em (i) By a well-known argument, see Lemma 7.1.5 in \cite{DaZa96}, gradient estimates \eqref{dop1} hold also when $\varphi \in B_b(\R^d)$, i.e., we have
\begin {equation} \label{dop}
{\sqrt{t\nu}}  \, |D_x P_t \varphi (x) | \le 
c \, \, V^{2}(x)\| \varphi\|_{\infty}, \;\;\;  \text{$t \in (0,1]$, $\varphi \in B_b (\R^d)$, $x \in \R^d.$}
\end{equation}
(ii) Using the semigroup law, it is easy to check that, for any $t>1$, we have  (with $c = c(M_0, \gamma, L, c_0,d)>0$)
$$
 \,\sqrt{\nu} |D_x P_t \varphi (x) | \le 
c \, \, V^{2}(x)\| \varphi\|_{\infty}, \;\;\;  \text{ $\varphi \in B_b (\R^d)$, $x \in \R^d.$}
$$
}
  \end{Remark}

\section{A counterexample to uniform gradient estimates }

We mention that a  one-dimensional counterexample to uniform gradient estimates is given in \cite{BeFo04}. 
 This  concerns with  diffusion  semigroups   
 having an invariant measure. In  such example
  the drift term $ b(x)$  grows faster than $e^{x^4}$ for $x \to +\infty$ and the  non-explosion is guaranteed by the existence  of an   invariant measure.



\smallskip
We consider transition Markov semigroups   $(P_t)$ associated to 
 the one-dimensional SDEs 
$$
d X(t) = b(X(t)) dt +    \sqrt{2}\, dW(t),\;\;\; X(0) =x \in \R,
$$
with $b \in C^1(\R)$. 
We  assume that there exists $\theta \in (0,1)$
 and $k_0>0$ 
 such that 
\begin{gather} 
\nonumber |b(x)| \le k_0(1 + |x|^{\theta});
\\
|b'(x)| \le k_0(1 + |x|^{2\theta}),\;\;\; x \in \R.
\label{d2}
\end{gather}
 Before stating the next theorem we recall that  if $b \in C^1(\R)$ is also bounded then uniform gradient estimates hold. This is a special case of a  result given in  \cite{St}.

\begin{Theorem}  Let us fix $\theta \in (0,1)$.
There exists $b \in C^1(\R)$ which satisfies \eqref{d2} for some $k_0$ 
such that,  for any $c>0,$
the inequality    
\begin{gather} \label{g1}
\| DP_t f\|_{\infty} \le \tfrac{c}{ \sqrt{t}} \| f \|_{\infty} \;\; \text{for any} \; t \in (0,1], \;\; \text{for any} \; f \in C_b(\R),
\end{gather}
\textit{does not hold}, i.e., 
$ \displaystyle{
\sup_{t \in (0,1]} \; \sup_{f \in C_b(\R),\, \| f\|_{\infty} \le 1} \big(\sqrt{t}\, \| DP_t f\|_{\infty} \big)  = \infty.}
$
\end{Theorem}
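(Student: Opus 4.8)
The plan is to construct $b$ explicitly by a ``bump'' argument, superposing rescaled copies of a fixed profile at a sparse sequence of points going to infinity, so that near each bump the drift looks locally like a strongly \emph{anti-dissipative} linear drift $b'(x) \approx \lambda_k > 0$ with $\lambda_k \to \infty$, while the sublinear constraints \eqref{d2} are respected because the bumps are spread out and scaled down. Concretely, fix a smooth compactly supported profile $\psi$ with $\psi' > 0$ on its support, pick points $x_k \to \infty$ with large gaps, set amplitudes $a_k$ and widths $\delta_k$ so that $b(x) = \sum_k a_k \psi\big(\tfrac{x - x_k}{\delta_k}\big)$ satisfies $|b| \le k_0(1+|x|^\theta)$ and $|b'| \le k_0(1+|x|^{2\theta})$; this forces roughly $a_k \lesssim x_k^\theta$ and $a_k/\delta_k \lesssim x_k^{2\theta}$, so one can take $\delta_k \sim x_k^{-\theta}$ and $a_k \sim x_k^\theta$, giving $b'(x_k) \sim x_k^{2\theta} \to \infty$.

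Next I would exploit the lower bound direction of the Bismut--Elworthy--Li / derivative-process representation: for additive noise, $D_x P_t f(x) = \E[f'(X(t,x))]$ in the regular case, or more robustly one has the formula $\sqrt t\, D_x P_t f(x) = \tfrac{1}{\sqrt t}\E\big[f(X(t,x)) \int_0^t \eta^1(s,x)\, dW(s)\big]$ where $\eta^1$ solves the linearized equation $\dot\eta = b'(X(s,x))\eta$, $\eta(0)=1$ (no stochastic term since the noise is additive), i.e.\ $\eta^1(s,x) = \exp\!\big(\int_0^s b'(X(r,x))\,dr\big)$. The key point is that if the process $X(\cdot, x_k)$ started near $x_k$ stays inside the $k$-th bump for a fixed fraction of a short time interval with probability bounded below uniformly in $k$, then $\eta^1(s,x_k)$ is of order $e^{c \lambda_k s}$ on that event, which is arbitrarily large; choosing $f$ to be a smooth approximation of $\mathrm{sgn}$ of the displacement (an odd bounded function adapted to the bump) one picks off this exponential growth and contradicts any bound $\sqrt t\,\|DP_t f\|_\infty \le c\|f\|_\infty$ with a \emph{fixed} $c$. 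One then lets $t = t_k \to 0$ chosen so that $\lambda_k t_k \to \infty$ while the process has not yet escaped the bump.

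The main obstacle is the confinement estimate: I must show that, for a suitable short time $t_k$ and with probability bounded below independently of $k$, the diffusion started at $x_k$ remains in the interval where $b' \gtrsim \lambda_k$, so that the exponential growth of $\eta^1$ is genuinely realized rather than averaged away. Since the noise is just $\sqrt 2\, W$, on a time interval of length $t_k$ the Brownian displacement is of order $\sqrt{t_k}$; one needs $\sqrt{t_k} \ll \delta_k \sim x_k^{-\theta}$, i.e.\ $t_k \ll x_k^{-2\theta}$, and simultaneously $\lambda_k t_k \sim x_k^{2\theta} t_k \to \infty$ — these are incompatible as stated, so the argument must be sharpened: one does not need the full width $\delta_k$, only that $X$ stays on the side of the bump where $\psi' > 0$, and the anti-dissipative drift itself \emph{pushes} the process deeper into that region, so a standard comparison/one-sided barrier argument (comparing $X(t,x_k)$ to the deterministic ODE plus a controlled Brownian perturbation, or using Girsanov to reduce to Brownian motion and estimating exit probabilities of a moving strip) gives confinement with probability $\ge p_0 > 0$ on a time scale long enough that $\int_0^{t_k} b'(X(r))\,dr \to \infty$. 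Once this is in place, choosing $f = f_k$ supported near $x_k$, odd about $x_k$, with $\|f_k\|_\infty = 1$ and $f_k \ge 0$ on the right of $x_k$, the representation yields $\sqrt{t_k}\,|DP_{t_k} f_k(x_k)| \to \infty$, which is exactly the stated divergence of the supremum; the remaining bookkeeping (making $b$ globally $C^1$, verifying \eqref{d2} with a single $k_0$, and justifying the gradient formula in this non-globally-Lipschitz but locally regular additive-noise setting via the approximation and interior-regularity tools already developed in Sections 3--5) is routine.
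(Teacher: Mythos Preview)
Your approach is entirely different from the paper's, and the fix you propose for the scaling obstruction you yourself identify does not work.

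The paper proceeds analytically. Via It\^o's formula, if $u\in C^2(\R)\cap C_b(\R)$ solves $u''+bu'=g$ with $g\in C_b(\R)$, then $P_tu=u+\int_0^tP_sg\,ds$, so \eqref{g1} would force $\|u'\|_\infty\le\|DP_1u\|_\infty+2c\|g\|_\infty<\infty$. The task is thus reduced to constructing $b$ satisfying \eqref{d2}, a bounded $g$, and a bounded classical solution $u$ with $u'$ unbounded. This is done explicitly: $e^{-\int_0^x b}$ is built to have tall narrow spikes at the integers, $g$ consists of alternating triangular bumps at half-integers, and a careful alternating-series argument shows $u$ is bounded while $|u'(n)|\to\infty$. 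No confinement estimate or control of the derivative process is required.

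In your probabilistic route the sentence ``the anti-dissipative drift itself pushes the process deeper into that region'' is backwards: $b'>0$ means the linearized flow is \emph{expanding}, so near a zero of $b$ the equilibrium is unstable, and where $b>0$ the drift carries $X$ across the half-bump $\{\psi'>0\}$ into $\{\psi'<0\}$, where $\eta^1$ then decays. For the deterministic flow $\dot X=b(X)$ one has the identity $\eta^1(t)=b(X(t))/b(X(0))$, so the stretching over one bump traversal is bounded by $\max b/b(X(0))$; for a smooth compactly supported $\psi$ this cannot be made large without starting where $b'$ is also small. With your scaling $\lambda_k\delta_k^2=O(1)$, the SDE started at a zero of $b$ inside the bump is locally an unstable Ornstein--Uhlenbeck whose typical exit time $\tau$ satisfies $\lambda_k\tau=O(1)$, hence $\int_0^{t_k}b'(X(r))\,dr=O(1)$ on the typical event. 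Salvaging the argument via rare long-confinement events would require a tail analysis of $\E[e^{\lambda_k\tau}]$ at the critical rate together with a correlation estimate against $f_k(X(t_k))$ in the Bismut--Elworthy--Li formula; this is not supplied and is far from routine. (Also, a smooth compactly supported $\psi$ cannot have $\psi'>0$ on its whole support.)
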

 Note that a drift  $b$ which   satisfies \eqref{d2} also verifies  Hypothesis \ref{h1} with $\gamma =1$ and $V(x) = k_0 (1+ |x|^2)$
  so that we can apply Theorem  \ref{mai} to $(P_t)$. 

  \subsection{Preliminaries}

The SDE is  associated to 
$$
L u(x) = 
 u''(x) + b(x) u'(x),\;\;\; x \in \R. 
$$
 Under our assumptions $b$ grows at most linearly and so
$
\P (\tau_N > t ) \to 1 \;\; \text{as} \; N \to \infty.
$
where $\tau_N = \tau_N^x = \inf\{t \ge 0, \; :\; |X(t,x)|\ge N \}$, $N \ge 1$. Let now $g \in C_b(\R)$. 

If there exists  $u \in C^2 (\R) \cap C_b (\R) $ such that  
\begin{gather*} 
L u(x) =g(x),\;\; x \in \R. 
\end{gather*}
then,  by the It\^o formula we know that, for any $N \ge 1,$ 
$$
\E[ u(X ({t \wedge \tau_N^x },x))] = u (x)  + \E \Big [\int_0^{t \wedge  \tau_N^x } g (X(s,x)) ds \Big].
$$
Passing to the limit as $N \to \infty$, we obtain 
\begin{equation}\label{ma1} 
P_t u(x) = u(x) + \int_0^t P_s g(x)ds,\;\;\;\;\; t \in [ 0,1), \; x \in \R.
\end{equation}
Assume by contradiction that uniform gradient estimates 
\eqref{g1}  hold. 
Then we would  have 
$$
 \| D u \|_{\infty} \le 
\| DP_1 u \|_{\infty} +  c  \int_0^1 \tfrac{1} {\sqrt{s}} ds \; \| g\|_{\infty} \le C (\| u\|_{\infty} + \| g \|_{\infty}) < \infty.
$$
for some $C>0$. Hence,  uniform gradient 
 estimates  \eqref{g1} do not hold if we show 
\begin{Lemma} \label{sss} 
 Let us fix $\theta \in (0,1)$.
There exist $b \in C^1(\R)$ which satisfies \eqref{d2} for some $k_0 >0$, a function  $f \in C_b(\R) $ and a  
  bounded classical solution $u$ to $Lu =f$ with  unbounded derivative, i.e., 
 $\sup_{x  \in \R} |u'(x)| = \infty.$
\end{Lemma}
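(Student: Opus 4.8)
The plan is to construct $b$ and $u$ explicitly so that the ODE $u'' + bu' = f$ is satisfied, with $u$ bounded but $u'$ unbounded, and then to check \eqref{d2}. The cleanest route is to \emph{prescribe} $u$ directly, compute $u'$ and $u''$, and then \emph{solve for $b$}: given a target profile $u$ with $u' \neq 0$ away from a discrete set one sets $b(x) = (f(x) - u''(x))/u'(x)$, which is forced. The art is to pick $u$ so that (i) $u$ is bounded, (ii) $u'$ is unbounded, (iii) $u' $ does not vanish except where we can control the resulting $b$ (so $b$ stays $C^1$ and satisfies the growth bounds \eqref{d2}), and (iv) $f = u'' + bu'$ is bounded. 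A natural candidate is an oscillatory $u$ whose amplitude decays but whose frequency grows, e.g. on successive intervals $[x_k, x_{k+1}]$ with $x_k \to \infty$, let $u$ perform one smooth bump of height $h_k \to 0$ but of width $w_k$ shrinking fast enough that the slope $h_k/w_k \to \infty$; then $\sup|u'| = \infty$ while $u$ remains bounded (indeed one can take $u \to 0$). Outside a neighbourhood of each bump take $u$ constant, so $u' = u'' = 0$ there and $b$ is unconstrained — just interpolate $b$ smoothly by something sublinear.

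First I would fix the geometry: choose $x_k \uparrow \infty$, heights $h_k \downarrow 0$, half-widths $\rho_k \downarrow 0$ with $h_k/\rho_k \to \infty$, and on $[x_k - \rho_k, x_k + \rho_k]$ set $u(x) = h_k\, \chi\!\big((x - x_k)/\rho_k\big)$ for a fixed smooth bump $\chi \in C_0^\infty(-1,1)$ with $\chi(0) \neq 0$; elsewhere $u \equiv 0$. Then $u \in C^\infty_b(\R)$, $\|u\|_\infty = \sup_k h_k |\chi|_\infty < \infty$, and $|u'(x_k)| \sim h_k/\rho_k \to \infty$. Second I would compute, on the $k$-th bump, $u'(x) = (h_k/\rho_k)\chi'(\cdot)$ and $u''(x) = (h_k/\rho_k^2)\chi''(\cdot)$, so that on the set where $\chi' \neq 0$ the forced drift is $b = (f - u'')/u' = \big(f \rho_k/h_k - \chi''/\rho_k\big)/\chi'$. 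The dangerous points are the zeros of $\chi'$ inside the bump (where $u'$ vanishes but $u''$ need not): there I must either design $\chi$ so $\chi''$ also vanishes to high order wherever $\chi' = 0$ (possible: take $\chi$ with a single interior max and $\chi'' \to 0$ at that max by flattening, or more simply accept that $b$ need only be defined as a limit and is controlled because $u''/u'$ stays bounded near a nondegenerate critical point of $\chi$), or choose $f$ to absorb the singularity. The robust choice is: let $\chi$ be \emph{monotone} on each half $(-1,0)$ and $(0,1)$ with a single flat maximum at $0$; then $\chi' \neq 0$ on $(-1,0)\cup(0,1)$, and near $0$ we have $u''/u' = (1/\rho_k)(\chi''/\chi')$, which is bounded provided $\chi''/\chi'$ is bounded near $0$, i.e. provided $\chi$ vanishes to the same order from both sides — arrange $\chi(t) = 1 - t^2\psi(t)$ near $0$ with $\psi$ smooth positive, giving $\chi'/\chi'' \to$ finite.

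Third, the key balancing act is to make $f := u'' + b u'$ bounded while $b$ satisfies \eqref{d2}; since $f = u''$ wherever $u' = 0$ and $f$ is at our disposal wherever $u' \neq 0$ (it is determined once we fix $b$), the real constraint is the reverse: pick $b$ first on each bump to be the \emph{minimal} choice, and read off $f$. Concretely, on the bump take $b(x) := k_0 (1 + |x|^\theta)\,\mathrm{sgn}(-u'(x)) \cdot\omega_k(x)$ for a smooth cutoff $\omega_k$ supported in $\{|\chi'|$ not small$\}$ and equal to $1$ on most of the bump; with $|x| \asymp x_k$ on the bump this respects $|b| \le k_0(1+|x|^\theta)$ and, since $b$ varies over a length $\sim \rho_k$, one needs $|b'| \lesssim x_k^\theta/\rho_k \le k_0(1 + x_k^{2\theta})$, i.e. $\rho_k \gtrsim x_k^{-\theta}$. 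Simultaneously boundedness of $f = u'' + b u'$ on the bump requires $|u''| = (h_k/\rho_k^2)|\chi''| \lesssim 1$ and $|b u'| \lesssim x_k^\theta (h_k/\rho_k) \lesssim 1$; combined with $h_k/\rho_k \to \infty$ and $\rho_k \gtrsim x_k^{-\theta}$ these pin down admissible scales, e.g. $\rho_k = x_k^{-\theta}$, $h_k = x_k^{-\theta}\log k \to 0$ with $h_k/\rho_k = \log k \to \infty$, while $h_k/\rho_k^2 = x_k^{\theta}\log k$ must stay $\lesssim 1$ — so instead take $x_k$ growing slowly, say $x_k = 2^k$ is too fast; rather take the bumps sparse and \emph{rescale}: replace $h_k \mapsto \varepsilon_k h_k$ with $\varepsilon_k \to 0$ chosen last to kill $\|u''\|_\infty$ and $\|bu'\|_\infty$ on the bump while keeping $h_k/\rho_k = \varepsilon_k\log k \to \infty$ — which is possible precisely because we have two small parameters and only need the ratio to blow up, however slowly. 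Finally I would verify $b \in C^1(\R)$ globally by smoothly gluing the bump pieces through the constant-$u$ regions (where $b$ is free and taken identically $0$), verify \eqref{d2} with the resulting $k_0$, set $f := u'' + bu' \in C_b(\R)$, and conclude that $u$ is a bounded classical solution of $Lu = f$ with $\sup_x|u'(x)| \ge \sup_k |u'(x_k)| = \infty$.

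The main obstacle, and the step deserving the most care, is the \emph{simultaneous} quantitative balancing in the third paragraph: one must choose the sequences $(x_k,\rho_k,h_k)$ (and a final rescaling $\varepsilon_k$) so that all four requirements hold at once — $u$ bounded ($\varepsilon_k h_k |\chi|_\infty$ bounded), $u'$ unbounded ($\varepsilon_k h_k/\rho_k \to \infty$), $|b|$ and $|b'|$ within \eqref{d2} (forcing $\rho_k \gtrsim x_k^{-\theta}$ and $|b| \lesssim x_k^\theta$), and $f$ bounded ($\varepsilon_k h_k/\rho_k^2$ and $x_k^\theta \varepsilon_k h_k/\rho_k$ bounded). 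These are compatible for any $\theta \in (0,1)$ — e.g. let the bumps sit at $x_k$ with gaps growing, $\rho_k = x_k^{-\theta}$, $h_k = 1$, and $\varepsilon_k \to 0$ slower than $1/\log k$ so $\varepsilon_k \log k \to \infty$ but $\varepsilon_k x_k^\theta \to 0$ — but checking each inequality with explicit constants is the one genuinely fiddly computation, and it is where a careless choice breaks the construction.
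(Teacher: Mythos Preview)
Your plan has a genuine gap: the parameter balancing in the third paragraph cannot be made to work, and the obstruction is structural, not just a matter of finding the right sequences.

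Take your final choice $\rho_k=x_k^{-\theta}$, $h_k=1$, rescaling $\varepsilon_k$. On the $k$-th bump
\[
|u'|\sim \varepsilon_k h_k/\rho_k=\varepsilon_k x_k^{\theta},\qquad
|u''|\sim \varepsilon_k h_k/\rho_k^{2}=\varepsilon_k x_k^{2\theta},\qquad
|b\,u'|\sim x_k^{\theta}\cdot \varepsilon_k x_k^{\theta}=\varepsilon_k x_k^{2\theta}.
\]
Boundedness of $f=u''+bu'$ forces $\varepsilon_k x_k^{2\theta}\lesssim 1$, hence $\varepsilon_k\lesssim x_k^{-2\theta}$, and then $|u'|\sim \varepsilon_k x_k^{\theta}\lesssim x_k^{-\theta}\to 0$: the derivative is not unbounded but tends to zero. (Your stated requirement ``$\varepsilon_k x_k^{\theta}\to 0$'' is precisely the wrong sign.) The deeper reason is that for a compactly supported bump $u=h_k\chi((\cdot-x_k)/\rho_k)$ there is an interior critical point where $u'=0$; at that point $f=u''$ regardless of $b$, so $|f|$ there equals $(h_k/\rho_k^{2})|\chi''(0)|$. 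If $\chi''(0)\neq 0$ this already forces $h_k/\rho_k^{2}$ bounded, which together with $h_k$ bounded and $h_k/\rho_k\to\infty$ is impossible. Your proposed fix $\chi(t)=1-t^{2}\psi(t)$ gives $\chi'(0)=0$, $\chi''(0)=-2\psi(0)\neq 0$, so $\chi''/\chi'\sim 1/t$ is unbounded, not bounded; more generally $(\log|\chi'|)'=\chi''/\chi'$ cannot stay bounded near an isolated zero of $\chi'$ since $\log|\chi'|\to -\infty$. So prescribing $u$ as a sum of disjoint localized bumps and reading off $b$ (or reading off $f$ after choosing $b$) cannot produce $u'$ unbounded with $f$ bounded and $b$ satisfying \eqref{d2}.

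The paper's construction goes the other way: it prescribes $b$ and $f$ and lets $u$ be determined by the integral formula $u'(x)=-e^{-B(x)}\int_{x}^{\infty} f(s)e^{B(s)}\,ds$. The point is to place the ``spikes'' of $e^{-B}$ (at integers $c_n=n$, with height $\sim n^{2\gamma}$, $\gamma=\theta/5$) and the support of $f$ (near half-integers, alternating sign, mass $\sim (-1)^{n+1}n^{-\gamma}$) on \emph{disjoint} sets. Then $u'(c_n)=e^{-B(c_n)}\cdot(\text{tail of an alternating series})\sim n^{2\gamma}\cdot n^{-\gamma}\to\infty$, while boundedness of $u$ comes from a delicate alternating-series/improper-integral argument (Leibniz-type estimates for the remainders). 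The mechanism that makes $u'$ large is the amplification factor $e^{-B}$ at points \emph{where $f$ vanishes}, which your bump ansatz cannot mimic since there $u'=0$ wherever $u$ is constant.
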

   For the proof we need two elementary results about alternating series and improper Riemann integrals. 
 The first one can be found in pages 637 and 638 of \cite{Joh}.

\begin{Proposition} \label{jo} 
Let $(a_k)_{k \ge 3}$ be a (strictly) decreasing sequence of positive numbers with limit 0 such that  $(a_k - a_{k+1})_{k \ge 3}$ is decreasing too.
Consider
$$
R_n = \sum_{k \ge n} (-1)^{k+1} a_k, \;\;\; n\ge 3. 
$$
 Then $R_n = (-1)^{n+1} |R_n|$ and 
\begin{equation} \label{q1}
\tfrac{a_n}{2} \le |R_n| \le \tfrac{a_{n-1}}{2},\;\; n \ge 4.
\end{equation} 
\end{Proposition}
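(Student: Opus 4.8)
The plan is to work entirely with the increments $b_k := a_k - a_{k+1} > 0$, $k \ge 3$, which by hypothesis form a decreasing sequence and which satisfy the telescoping identity $\sum_{k \ge m} b_k = a_m$ for every $m \ge 3$ (since $a_k \to 0$). First I would settle the sign of $R_n$: the alternating series defining it converges because $a_k \downarrow 0$, so it may be regrouped in consecutive pairs,
$$
(-1)^{n+1} R_n = a_n - a_{n+1} + a_{n+2} - a_{n+3} + \cdots = \sum_{j \ge 0} b_{n+2j} > 0 ,
$$
which shows $R_n = (-1)^{n+1}|R_n|$ and yields the representation $S := |R_n| = \sum_{j \ge 0} b_{n+2j}$.

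For the two-sided estimate I would compare $S$ with the complementary odd-offset sums. Splitting the telescoping sum for $a_n$ according to the parity of the offset from $n$ gives
$$
a_n = \sum_{k \ge n} b_k = \sum_{j \ge 0} b_{n+2j} + \sum_{j \ge 0} b_{n+1+2j} = S + \sum_{j \ge 0} b_{n+1+2j},
$$
and since $(b_k)$ is decreasing one has $b_{n+1+2j} \le b_{n+2j}$ for each $j$, so the remaining sum is at most $S$; hence $a_n \le 2S$, i.e. $S \ge a_n/2$. Similarly, for $n \ge 4$ the index $n-1 \ge 3$ is admissible and
$$
a_{n-1} = \sum_{k \ge n-1} b_k = \sum_{j \ge 0} b_{n-1+2j} + \sum_{j \ge 0} b_{n+2j} = \sum_{j \ge 0} b_{n-1+2j} + S ,
$$
where now $b_{n-1+2j} \ge b_{n+2j}$ by monotonicity, so the first sum is at least $S$; hence $a_{n-1} \ge 2S$, i.e. $S \le a_{n-1}/2$. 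This is exactly \eqref{q1}.

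There is no real obstacle: the statement is elementary, and the only points deserving a word of care are the legitimacy of regrouping the series (immediate from its convergence) and the telescoping identity $\sum_{k \ge m} b_k = a_m$ (which uses $a_k \to 0$). The extra hypothesis that $(a_k - a_{k+1})$ is decreasing --- discrete convexity of $(a_k)$ --- is precisely what upgrades the classical alternating-series bound $0 < |R_n| < a_n$ to the symmetric estimate with constant $1/2$.
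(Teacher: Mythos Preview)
Your argument is correct and complete. Note, however, that the paper does not actually supply its own proof of this proposition: it simply refers the reader to pages 637--638 of Johnsonbaugh \cite{Joh}. Your proof via the increments $b_k = a_k - a_{k+1}$ and the parity splitting of the telescoping identity $a_m = \sum_{k \ge m} b_k$ is essentially the same idea that underlies the cited source, and it is a welcome self-contained treatment; the only mild remark is that the regrouping of the alternating series into pairs is justified because the series is convergent (not merely conditionally rearrangeable), which you already noted.
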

Note that $a_k = \tfrac{1}{k^{\gamma}}$, $\gamma >0$, verifies the previous conditions. 

Under the assumptions of  the previous result,  by \eqref{q1} we deduce  that  $|R_n|$ is decreasing and has limit 0. Hence, by the Leibnitz criterion, 
 \begin{equation} \label{u22}
\sum_{n \ge 4} R_n \;\; \text{converges}.
\end{equation}
We will use \eqref{u22} in the sequel.

\begin{Proposition} \label{aa} Let $a \in \R$ and let $g : [a, \infty) \to \R$ be a continuous function. Consider an increasing sequence $(z_n)_{n \ge n_0} \subset  [a, \infty)$ such that $z_{n_0}=a$ and  $\lim_{n \to \infty} z_n = \infty$.
  Then there exists $I = \lim_{x \to \infty} \int_a^x g(t) dt$ if the following two conditions are satisfied:

 \smallskip 
(i) \  $\sum_{n \ge n_0} \int_{z_n}^{z_{n+1}} g(t) dt = I$;

(ii) \  $\lim_{n \to \infty} \sup_{x \in [z_n, z_{n+1}]} \big | \int_{z_n}^{{x}} g(t) dt\big| =0.$ 
\end{Proposition}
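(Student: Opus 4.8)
The plan is to reduce the statement to a routine $\varepsilon$-argument, exploiting the natural decomposition of the ``incomplete'' integral $\int_a^x g(t)\,dt$ into a block of completed pieces plus one partial piece. First I would observe that, by additivity of the integral and $z_{n_0}=a$, the $m$-th partial sum of the series in (i) is exactly
$$
\sum_{k=n_0}^{m-1}\int_{z_k}^{z_{k+1}} g(t)\,dt \;=\; \int_{z_{n_0}}^{z_m} g(t)\,dt \;=\; \int_a^{z_m} g(t)\,dt .
$$
Hence (i) says precisely that $\int_a^{z_m} g(t)\,dt \to I$ as $m\to\infty$: given $\varepsilon>0$ there is $N_1$ with $\bigl|\int_a^{z_m} g(t)\,dt - I\bigr|<\varepsilon$ for all $m\ge N_1$. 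Condition (ii) provides an $N_2$ such that $\sup_{x\in[z_m,z_{m+1}]}\bigl|\int_{z_m}^x g(t)\,dt\bigr|<\varepsilon$ for all $m\ge N_2$.

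Next I would set $N=\max(N_1,N_2)$ and take any $x\ge z_N$. Since $(z_n)$ is increasing with limit $+\infty$, there is a unique index $m\ge N$ with $x\in[z_m,z_{m+1}]$; crucially $m\ge N$ forces $m\ge N_1$ and $m\ge N_2$, so both estimates above apply to this $m$ at once. Writing
$$
\int_a^x g(t)\,dt \;=\; \int_a^{z_m} g(t)\,dt \;+\; \int_{z_m}^x g(t)\,dt
$$
and using the two bounds gives $\bigl|\int_a^x g(t)\,dt - I\bigr|<2\varepsilon$. Because $z_N$ can be made arbitrarily large (as $z_n\to\infty$) and $\varepsilon>0$ was arbitrary, this yields $\lim_{x\to\infty}\int_a^x g(t)\,dt = I$, which is the assertion.

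I do not expect any genuine obstacle here: continuity of $g$ is needed only so that all the integrals over compact subintervals are well-defined proper Riemann integrals, and the sole point requiring a little care is the bookkeeping that the block index $m$ containing a point $x\ge z_N$ is automatically $\ge N$, so that the control on completed blocks (from (i)) and the control within the current block (from (ii)) can be invoked simultaneously.
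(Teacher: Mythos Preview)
Your proof is correct and follows essentially the same approach as the paper: both decompose $\int_a^x g = \int_a^{z_m} g + \int_{z_m}^x g$ for the block index $m$ containing $x$, control the first term by (i) and the second by (ii), and combine to get a $2\varepsilon$ bound. The only cosmetic difference is that you pick separate thresholds $N_1,N_2$ and set $N=\max(N_1,N_2)$, whereas the paper chooses a single $N$ at once.
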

\begin{proof} Let $\epsilon >0$. There exists $N = N_{\epsilon} > n_0$ such that, for any $n \ge N,$ 
$$
 \Big | \sum_{k = n_0}^n \int_{z_k}^{z_{k+1}} g(t) dt - I \Big |
=  
\Big | \int_{a}^{z_{n+1}} g(t) dt - I \Big | < \epsilon
$$
and  $\sup_{x \in [z_n, z_{n+1}]} \big | \int_{z_n}^{{x}} g(t) dt\big| < \epsilon$. Now if  $x \ge z_{N+1}$ then $x \in [z_{N+k}, z_{N+k+1} ]$ for some $k \ge 1$ and we find
$$
\Big | \int_{a}^{x} g(t) dt - I \Big |
 \le \Big | \int_{a}^{z_{N+k}} g(t) dt - I \Big | +
 \sup_{x \in [z_{N+k}, z_{N+k+1}]}
 \Big | \int_{z_{N+k}}^x g(t) dt  \Big |
< 2\epsilon.
$$
 \end{proof}
\subsection {Proof of Lemma \ref{sss}
}

Solutions $u$ to $Lu =f$ are given by 
$$
u (x)= c_0 + \int_0^x e^{- B(t)} \Big (c_1 +  \int_0^{t} f(s) e^{B(s)} ds \Big)dt,\;\;\; x \in \R,
$$
where $B(t) = \int_0^t b(r) dr$.  We set $c_0 =0$.
We will construct $b$ satisfying \eqref{d2} and  a suitable changing  sign function $f \in C_b (\R)$ with the property 
that  
\begin{equation*}
\int_{- \infty}^{0} |f(t)| e^{B(t)}dt < \infty
\;\;\; \text{and}
 \end{equation*}
\begin{equation} \label{112}
J=  \int_0^{ \infty} f(t) e^{B(t)}dt 
\;\;\; \text{is a convergent  improper   integral}
 \end{equation}
(the second integral will be only conditionally convergent since $f(t) e^{B(t)}$ will be  not Lebesgue integrable on $\R_+$). Moreover we will require that 
\begin{equation} \label{segno}
 J = - c_1\;\;\text{where} \, c_1 =
\int_{- \infty}^{0} f(t) e^{B(t)}dt.
\end{equation}
Therefore
$$
\int_{-\infty}^{+\infty} e^{B(s)}\,f(s)ds=0.
$$
 Under the previous conditions we have:
\begin{gather*}
 u(x) = - \int_x^0 e^{- B(t)} \Big (c_1 -  \int_t^{0} f(s) e^{B(s)} ds \Big)dt
\\ =
 - \int_x^0 e^{- B(t)} \Big ( \int_{- \infty}^{t} f(s) e^{B(s)} ds \Big)dt
,\;\;\; x <0;
\end{gather*}
\begin{equation} \label{piu}
 u(x) = - \int_0^x e^{- B(t)} \int_t^{\infty} f(s) e^{B(s)} ds dt,\;\;\; x \ge 0,
\end{equation}
where $ \int_t^{\infty} f(s) e^{B(s)} ds  = \lim_{M \to \infty}  \int_t^{M} f(s) e^{B(s)} ds  $ exists as an improper  Riemann integral. Now we proceed in some steps.

\paragraph{I Step} {\sl To prove the lemma we need to 
 find $b \in C^1(\R_+)$ satisfying \eqref{d2} on $[0, \infty)$ and $f \in C_b(\R_+)$ such that condition \eqref{112} hold and moreover  
\begin{equation} \label{wwwa}
f(0)=0,\;\; b(0)=1,\;\; b'(0)=0.
\end{equation}
We also need that the $C^2$-function $u$ given in  \eqref{piu} verifies 
\begin{gather} 
 \sup_{x \in \R_+}|u(x)| < \infty \;\; \text{and}
\label{bo1}
\\
 \label{gra1}
 \limsup_{x \to \infty} |u'(x)| = \infty.
\end{gather}
}
Indeed once this is done, we can easily extend $b$ and $f$ to $(-\infty, 0]$ as follows: 
$$
b (x) =1, \;\;\; f(x) = k x e^{x},\;\; x<0,
$$
so that, for $x<0,$  
$$
u(x)= - k\int_x^0  e^{-t}\big ( \int_{- \infty}^{t} s e^{2s} ds \big)dt = \tfrac{k}{4} \int_0^x e^{-t} (2te^{2t} - e^{2t}) dt = \tfrac{k}{4} (2x e^{x} - 3e^{x} +3), 
$$
where  $k \in \R$ is such that 
$
J = - \int_{- \infty}^0 f(r) e^{ B(r)} dr 
= 
- k \int_{- \infty}^{0} t e^{2t} dt =k$ and so \eqref{segno} holds.

\paragraph{Step 2} {\sl We define $b$ on $[0, \infty)$.}

 We  first consider a suitable  positive  $C^2$-function $l :  [0, \infty) \to \R_+$ such that 
$$
l(x) =  e^{- \int_0^x {b(s)}  ds }= e^{-B(x)},\;\;\; x \ge 0;
$$
it follows that $ -\log (l(x)) = \int_0^x {b(s)}  ds $ and we have
\begin{equation} \label{dee}
b(x) =  - \tfrac{l'(x)}{l(x)}, \;\;\;\;  x \ge 0.
\end{equation}
The function $l$ is defined as follows.  

We start with $\phi \in C_0^{\infty} (\R)$ such that Supp$(\phi) \subset [-1,1]$, $0 \le \phi \le 1$, $\phi(0)=1$ and 
$
 \int_{\R} \phi (t) dt =1.
$

Let us introduce the following sequences of positive numbers $(c_n),$ $(\delta_n)$ and $(b_n)$
 \begin{equation}
\label{211}  
c_n = n, \;\;\; \delta_n = \tfrac{1}{n^{3 \gamma}}, \;\; b_n = n^{2\gamma} -1, \;\;
n \ge 3, 
\end{equation}
with  $\gamma = \tfrac{\theta}{5}\in (0,1)$ (see condition \eqref{d2}). The function $l$ is given by 
\begin{equation} \label{qqq}
\begin{cases}
l(x) = 1+ b_n \phi \big (\tfrac{x- c_n}{\delta_n} \big),\;\;\text{if } \; x \in [c_n - \delta_n , c_n + \delta_n], \;\text{for some } \; n \ge 3;
\\ \\
 l(x) =1,  \;\; \text{ if $x \not \in  \bigcup_{n \ge 3} [c_n - \delta_n , c_n + \delta_n]$.}
\end{cases}
 \end{equation}
We have $ l(c_n) = 1 + b_n = n^{2 \gamma}$, $l (c_n - \delta_n ) = l(c_n + \delta_n) =1$. In the sequel we will also use that   
\begin{equation} \label{2ee}
\int_{c_n - \delta_n}^{c_n + \delta_n} l(x) dx = 2 \delta_n + \delta_n b_n = \delta_n (n^{2\gamma} - 1 + 2) =  \delta_n + \tfrac{1}{n^{\gamma}}.
\end{equation}
Note that $b$ satisfies  the first estimate in  \eqref{d2} since $|b(x)| = \tfrac{|l'(x)|}{|l(x)|} =0$ if $x \not \in \bigcup_{n \ge 3} [c_n - \delta_n , c_n + \delta_n] $ and if $x \in [c_n - \delta_n , c_n + \delta_n] = [n - \tfrac{1}{n^{3 \gamma}}, n + \tfrac{1}{n^{3 \gamma}}]$ 
then with $C = C(\phi) >0$, since $l(x)|\ge 1$,
\begin{gather} \nonumber
|b(x)| = \tfrac{|l'(x)|}{|l(x)|} \le \| \phi'\|_{\infty} \tfrac{b_n}{ \delta_n} = \| \phi'\|_{\infty} \tfrac{ n^{2\gamma} -1}{ \delta_n} = \| \phi'\|_{\infty}
(n^{5 \gamma } - n^{3\gamma})\\ \le C n^{5 \gamma}
 \le C \big( n - \tfrac{1}{n^{3 \gamma}}  \big)^{5 \gamma} + 1 \le 
C( x^{5 \gamma} + 1) \le C( x^{\theta} + 1).
 \label{dff}
\end{gather}
This shows  the first estimate in  \eqref{d2} on $[0, \infty)$ (as required in Step 1).   To prove that 
 $$
|b'(x)| \le c (1 + |x|^{2\theta}), \;  x \in \R,
$$
with $c = c(\phi)>0,$ we argue as in \eqref{dff};  for $x \in [c_n - \delta_n , c_n + \delta_n] = [n - \tfrac{1}{n^{3 \gamma}}, n + \tfrac{1}{n^{3 \gamma}}]$, we have with $C' = C' (\phi) >0$: 
\begin{gather*}
|b'(x)| \le \tfrac{b_n}{\delta_n^2} \| \phi''\|_{\infty} + 
\tfrac{b_n^2}{\delta_n^2} \| \phi'\|_{\infty}
\\ \le 
C'  n^{10 \gamma } 
\le C' \big( n - \tfrac{1}{n^{3 \gamma}}  \big)^{10 \gamma} + 1 \le
C' (x^{10 \gamma} +1) \le C' (x^{2 \theta} +1). 
\end{gather*}
This 
completes the proof of 
 \eqref{d2}.

\paragraph{Step 3} {\sl We define $f$ on $[0, \infty)$. }

Let $a>0$. We  first consider 
the function $g(s)=1-\tfrac{|s|}{a},\;s\in[-a,a]$. We note that 
$$
 \int_{-a}^a  dt \int_t^a g(s) ds =  \int_{-a}^a  g(s) (s+a)  ds = a^2
$$
and similarly, for any $z \in \R$,
\begin{equation} \label{d33}
\int_{z-a}^{z+a}  dt \int_t^{z+ a} \Big (  1 - \tfrac{|s- z|}{a}\Big) ds =a^2.
\end{equation} 
Let us define, for $n \ge 3,$
$$
a_n = \tfrac{1}{n^{\gamma}},\;\;\; x_n = n + \tfrac{1}{2},
$$
where $\gamma = \tfrac{\theta}{5}$ as in \eqref{211}. 
 There exists an  odd number $n_0 = n_0(\gamma)\ge 3$ such that 
\begin{equation} \label{lista}
c_n - \delta_n = n - \tfrac{1}{n^{3\gamma}} <  c_n + \delta_n
< x_n - a_n = (n+ \tfrac{1}{2}) - \tfrac{1}{n^{\gamma}} < x_n + a_n < c_{n+1} - \delta_{n+1}.
\end{equation}
To this purpose it is enough to choose $n_0$ such that
 $\tfrac{1}{n^{3\gamma}_0} +
 \tfrac{1}{n^{\gamma}_0} < 1/2.$
 The function $f: [0, \infty) \to \R$ is defined as follows.  
\begin{equation} \label{cdd}
\begin{cases}
f(x) =  (-1)^{n+1} \Big (  1 - \tfrac{|x- x_n|}{a_n}\Big),\;\;\; \text{if} \; x \in [x_n - a_n , x_n + a_n],\; \text{for some} \;n \ge n_0; 
\\ \\
f(x)=0 \;\;\; \text{ if $x \not \in  \bigcup_{n \ge n_0} [x_n - a_n , x_n + a_n]$.}
\end{cases}
 \end{equation}
Note that, according to \eqref{qqq},  since
 $l$ is identically 1 on the support of $f$, 
\begin{equation} \label{wss}
\tfrac{f(x)}{ l(x)} = f(x),\;\;\; x \ge 0.
\end{equation}
Moreover,
$$
\lim_{n \to \infty} \sup_{x \in [x_n - a_n, x_n + a_{n}] } \Big |\int_{x_n - a_n}^{x}   {f(s)}  ds \Big | = 0.
$$
 Recall that  $\displaystyle{\int_{x_k-a_k}^{x_k + a_k }  f(t) dt = (-1)^{k+1} a_k.}$
By Proposition \ref{aa} it follows that
\begin{equation} \label{ffr}
\int_{x_n-a_n}^{\infty}  f(t) dt
= \sum_{k \ge n}\int_{x_k-a_k}^{x_k + a_k }  f(t) dt
= 
\sum_{k \ge n} (-1)^{k+1} a_k,\;\;\; n \ge n_0. 
\end{equation}
Hence, in particular,
$$
\int_{0}^{\infty} \tfrac{f(x)}{ l(x)} dx =
\int_{x_{n_0} -a_{n_0}}^{\infty}  f(t) dt
= 
\sum_{k \ge n_0} (-1)^{k+1} a_k
$$
is a convergent integral and  \eqref{112} holds.
 In the sequel we will also use that 
\begin{equation} \label{www}
 \int_{x_n-a_n}^{x_n+a_n}  dt \int_t^{x_n+ a_n} f(s)ds
 =(-1)^{n+1}\,  a^2_n, \;\; n \ge n_0.
\end{equation}
\paragraph {Step 4} {\sl We check that  $u$ given in  \eqref{piu}
verifies \eqref{bo1} and \eqref{gra1}.}

Note that, 
 for $ x \ge c_{n_0 - \delta_{n_0}},$
 \begin{equation} \label{piu1}
 u(x) = - \int_0^x l(t) dt \int_t^{\infty} \tfrac{f(s)}{l(s)}  ds = 
 C_0  
 - \int_{c_{n_0 - \delta_{n_0}}}^x l(t) dt \int_t^{\infty} {f(s)}  ds,
\end{equation}
where $C_0 =- \int_0^{c_{n_0 - \delta_{n_0}}} l(t) dt \int_t^{\infty} \tfrac{f(s)}{l(s)}  ds  $ thanks to \eqref{wss}. 

Let us first check \eqref{gra1}. We have, for $n \ge n_0$,
taking into account \eqref{ffr} and \eqref{q1},
\begin{gather*}
  |u'(c_n)| =  \Big | l(c_n) \int_{c_n}^{\infty} f(s)  ds  \Big| = (b_n +1) \Big | \int_{x_n -a_n}^{\infty} f(s)  ds  \Big|\\  = n^{2 \gamma}   \Big | \sum_{k \ge n} (-1)^{k+1} a_k
\Big|  
\ge n^{2 \gamma}  \, \tfrac{a_{n}}{2} = n^{2 \gamma}  \, \tfrac{1}{2 n^{\gamma} } \to \infty,
\end{gather*}
as $n \to \infty$. This shows \eqref{gra1}.
 The proof of \eqref{bo1} is more involved. By \eqref{piu1} it is enough to verify that there exists
\begin{equation} \label{imp1}
\lim_{x \to \infty} \int_{c_{n_0 - \delta_{n_0}}}^x l(t) dt \int_t^{\infty} {f(s)}  ds = I \in \R.
\end{equation}
\paragraph {Step 5} {\sl We  check \eqref{imp1} by Proposition \ref{aa}.}

We first 
prove that
\begin{equation} \label{se11}
 \sum_{n \ge n_0}  \int_{c_n - \delta_n}^{c_{n+1} - \delta_{n+1}}  l(t) dt \int_t^{\infty} {f(s)}  ds \;\; \text{is convergent.}
\end{equation}
We write, using \eqref{lista},
\begin{gather} \label{rit11}
  \int_{c_n - \delta_n}^{c_{n+1} - \delta_{n+1}}  l(t) dt \int_t^{\infty} {f(s)}  ds 
  \\
\nonumber   =  \int_{c_n - \delta_n}^{c_{n} + \delta_{n}}  l(t) dt \int_t^{\infty} {f(s)}  ds 
  +  
   \int_{c_n + \delta_n}^{x_{n} - a_{n}}  l(t) dt \int_t^{\infty} {f(s)}  ds 
\\   \nonumber 
 +  
    \int_{x_n - a_n}^{x_{n} + a_{n}}  l(t) dt \int_t^{\infty} {f(s)}  ds     
 +    \int_{x_n + a_n}^{c_{n+1} - \delta_{n+1}}  l(t) dt \int_t^{\infty} {f(s)}  ds 
\\ \nonumber
= A_n + B_n + C_n + D_n.
\end{gather}
To verify  \eqref{se11} it is enough to prove that $\sum_{n \ge n_0} (A_n +B_n)$ and 
$\sum_{n \ge n_0} (C_n +D_n)$ 
are convergent. 

We have by \eqref{ffr} and \eqref{2ee}, since $l =1$ on $[c_n + \delta_n, x_n - a_n]$,
\begin{gather*}
A_n + B_n =   \int_{c_n - \delta_n}^{c_{n} + \delta_{n}}  l(t) dt \int_{x_n - a_n}^{\infty} {f(s)}  ds
+ 
 \int_{c_n + \delta_n}^{x_{n} - a_{n}}  l(t) dt \int_{x_n - a_n}^{\infty} {f(s)}  ds
\\
=  \Big [ \int_{c_n - \delta_n}^{c_{n} + \delta_{n}}  l(t) dt +  \int_{c_n + \delta_n}^{x_{n} - a_{n}}   dt 
\Big] \sum_{k \ge n} (-1)^{k+1} a_k
\\
= 
  \Big ( \delta_n + \tfrac{1}{n^{\gamma}}  + x_n - a_n - c_n - \delta_n \Big) \sum_{k \ge n} (-1)^{k+1} a_k
\\
= \big ( x_n - c_n \big)  \sum_{k \ge n} (-1)^{k+1} a_k 
= \tfrac{1}{2} \sum_{k \ge n} (-1)^{k+1} \tfrac{1}{k^{\gamma}}.
\end{gather*}
By  Proposition \ref{jo} and the Leibnitz criterion (see \eqref{u22}) we infer that
\begin{gather*}
 \sum_{n \ge n_0} (A_n +B_n) = \tfrac{1}{2}
 \sum_{n \ge n_0}  \sum_{k \ge n} (-1)^{k+1} \tfrac{1}{k^{\gamma}} \;\; \text{is convergent.}
\end{gather*}
Now, since $l=1$   on $[ x_{n} - a_{n},c_{n+1} - \delta_{n+1} ]$, recalling the definition of $f$ we find
\begin{gather*}
 C_n + D_n =   \int_{x_{n} - a_{n}}^{x_{n} + a_{n}}  dt \int_{t}^{\infty} {f(s)}  ds
+ 
 \int_{x_n + a_n}^{c_{n+1} - \delta_{n+1}}   dt \int_{t}^{\infty} {f(s)}  ds
 \\
 = \int_{x_{n} - a_{n}}^{x_{n} + a_{n}}  dt \int_{t}^{x_n + a_n} {f(s)}  ds
+ 
\int_{x_{n} - a_{n}}^{x_{n} + a_{n}}  dt \int_{x_{n+1} - a_{n+1}}^{\infty} {f(s)}  ds
\\ + 
 \int_{x_n + a_n}^{c_{n+1} - \delta_{n+1}}   dt \int_{x_{n+1} - a_{n+1}}^{\infty} {f(s)}  ds.
\end{gather*}
Using \eqref{ffr}  and \eqref{www}  we get
\begin{gather*}
  C_n + D_n =  (-1)^{n+1}\,  a^2_n
 \\ + (2 a_n + c_{n+1} - \delta_{n+1} - x_n - a_n)  \sum_{k \ge n+1} (-1)^{k+1} \tfrac{1}{k^{\gamma}}  
\\ 
= (-1)^{n+1}\,  a^2_n + \tfrac{1}{2}\sum_{k \ge n+1} (-1)^{k+1} \tfrac{1}{k^{\gamma}} + (a_n   - \delta_{n+1})  \sum_{k \ge n+1} (-1)^{k+1} \tfrac{1}{k^{\gamma}}. 
  \end{gather*}
Applying again Proposition \ref{jo} and the Leibnitz criterion we obtain the convergence of $\sum_{n \ge n_0} C_n + D_n $ if we are able to show that the sequence
$$
(a_n - \delta_{n+1}) = \big (\tfrac{1}{n^{\gamma}} - \tfrac{1}{(n+1)^{3 \gamma}}\big ) \;\; \text{is definitively decreasing.}
$$
This can be easily checked by looking at the  function $g(s) = 
 \big (\tfrac{1}{s^{\gamma}} - \tfrac{1}{(s+1)^{3 \gamma}}\big )$; one proves that  there exists $n_1 = n_1(\gamma) > n_0$ such that 
 $(a_n - \delta_{n+1})_{n \ge n_1}$ is decreasing.
This shows  \eqref{se11}. Now let us define 
$$
I =  \sum_{n \ge n_0}  \int_{c_n - \delta_n}^{c_{n+1} - \delta_{n+1}}  l(t) dt \int_t^{\infty} {f(s)}  ds.
$$
According to Proposition \ref{aa} to obtain \eqref{imp1}, it remains to prove that 
\begin{equation} \label{fi1}
\Gamma_n = \sup_{x \in [c_n - \delta_n, c_{n+1} - \delta_{n+1}] } \Big |\int_{c_n - \delta_n}^{x}  l(t) dt \int_t^{\infty} {f(s)}  ds \Big | \to 0
\end{equation}
as $n \to \infty.$  Using the same notations of \eqref{rit11} we find
\begin{gather*}
 \Gamma_n \le 
2\sup_{x \in [c_n - \delta_n, c_n + \delta_n ]} 
 \int_{c_n - \delta_n}^{x}  l(t) \Big |  \int_{x_n -a_n}^{\infty} {f(s)}  ds \Big |dt 
 \\ + 2 
 \sup_{x \in [c_n + \delta_n, x_n - a_n]} 
 \int_{c_n + \delta_n}^{x}   \Big |  \int_{x_n -a_n}^{\infty} {f(s)}  ds \Big |dt 
\\
+  2\sup_{x \in [x_n - a_n, x_n + a_n ]} \Big ( \Big |
 \int_{x_n - a_n}^{x}   dt   \int_t^{x_n + a_n} {f(s)}  ds  \Big | 
\\ +   
 \int_{x_n - a_n}^{x}    \Big |  \int_{x_{n+1} - a_{n+1} }^{\infty} {f(s)}  ds  \Big | dt
  \Big)
 \\ + 2
 \sup_{x \in [x_n + a_n , c_{n+1} - \delta_{n+1}] } 
 \int_{x_n + a_n}^{x}   \Big |  \int_{x_{n+1} - a_{n+1}}^{\infty} {f(s)}  ds \Big |dt.
\end{gather*}
 By the previous computations involving  $A_n$, $B_n$, 
 $C_n$ and $D_n$, 
 using also  that
$$
 \Big |  \int_{x_{n} - a_{n}}^{\infty} {f(s)}  ds \Big | = 
\Big | 
\sum_{k \ge n} (-1)^{k+1} a_k 
\Big| \le \tfrac{a_{n-1}}{2} = \tfrac{1}{ 2 (n-1)^{\gamma}}, \;\; n \ge n_0
$$
(see \eqref{ffr} and Proposition \ref{jo}) it is straightforward to prove \eqref{fi1}. 
 This shows that \eqref{imp1} holds and finishes the proof.

\begin{Remark}  {\em  Concerning the drift term $b$ of the previous result, it can also be checked that the derivative $b'(x)$ is unbounded  from above  and from below. Thus condition \eqref{de22} does not hold for $b$.
}
\end{Remark}

\def\ciao{
\begin{Remark} {\em Note that the function $f$ of the previous lemma verifies 
$$
f \in L^{p_0}(\R)
$$
for $p_0$ such that $\gamma p_0 = \tfrac{\theta}{5}p >1$. By the Sobolev embedding Lemma \ref{sss} also provides a counterexample to Sobolev  elliptic regularity. Indeed with $b$ and $f$ as in the lemma, there exists a classical bounded solution $u $ to 
$$
u''(x) + b(x) u'(x) = f(x),\;\; x \in \R
$$
such that 
} 
\end{Remark}
}

  \subsection*{Acnowledgement}   The  authors were partially supported 
by the Istituto Nazionale di Alta Matematica (INdAM), through the GNAMPA
Research Project (2016)  ``Controllo, regolarit\`a e viabilit\`a per alcuni tipi di equazioni diffusive''.  The authors also thank  G. Metafune (Lecce) for useful discussions.

\end{document}